\documentclass{amsart}
\usepackage[
  letterpaper,
  left=1in,
  right=1in,
  top=1in,
  bottom=1in
]{geometry}
\usepackage[T1]{fontenc}
\usepackage{booktabs} 
\usepackage[ruled]{algorithm2e} 
\usepackage{natbib} 

\SetAlFnt{\small}
\SetAlCapFnt{\small}
\SetAlCapNameFnt{\small}
\SetAlCapHSkip{0pt}
\IncMargin{-\parindent}

\usepackage{graphicx,xcolor,booktabs,hyperref,tikz,multicol,mathtools,placeins,epstopdf,xspace}
\usepackage{siunitx}
\usepackage{subcaption} 
\DeclareGraphicsExtensions{.png,.pdf,.jpg,.eps}
\usepackage{xcolor}
\usepackage{amsmath}

\newcommand{\R}{\mathbb{R}}
\newcommand{\N}{\mathbb{N}}

\newcommand{\C}{\mathcal{C}}
\newcommand{\G}{\mathcal{G}}
\newcommand{\WG}{\widetilde{\mathcal{G}}}
\newcommand{\WO}{\widetilde{\Omega}}
\newcommand{\E}{\mathcal{E}}
\newcommand{\str}{\mathop{\rm str}}
\newcommand{\wk}{\mathop{\rm wk}}
\newcommand{\dcand}{D_B}
\newcommand{\dcandbar}{\overline{D}_B}
\newcommand{\dHaus}{d_{\hbox{\rm \scriptsize Haus}}}
\newcommand{\bb}{\mathfrak{b}}

\newcommand{\hh}{\mathfrak{h}}
\DeclareMathOperator{\avg}{avg}

\newcommand{\Con}{{\tt Con}\xspace}
\newcommand{\LD}{{\tt LD}\xspace}
\newcommand{\SNP}{{\tt SNP}\xspace}
\newcommand{\Lab}{{\tt Lab}\xspace}
\newcommand{\Grn}{{\tt Grn}\xspace}

\theoremstyle{plain}
\newtheorem{theorem}{Theorem}[section]
\newtheorem{proposition}[theorem]{Proposition}
\newtheorem{corollary}[theorem]{Corollary}

\theoremstyle{definition}
\newtheorem{definition}[theorem]{Definition}
\newtheorem{example}[theorem]{Example}

\theoremstyle{remark}
\newtheorem{remark}[theorem]{Remark}

\definecolor{alizarin}{rgb}{0.36, 0.54, 0.66}
\definecolor{Labcolor}{HTML}{E32636}
\definecolor{Concolor}{HTML}{0F4D92}
\definecolor{LDcolor}{HTML}{FF9933}
\definecolor{Gcolor}{HTML}{4CBB17}
\definecolor{SNPcolor}{HTML}{FFE135}
\definecolor{Indcolor}{HTML}{008B8B}

\subjclass[2020]{Primary 05C12; Secondary 91B12, 91B14}
\keywords{ranking statistics, permutation distances, metric geometry, 
computational social choice, ballot clustering}

\title[Metric geometry for ranking-based voting]{Metric geometry for ranking-based voting: Tools for learning electoral structure}

\author{Moon Duchin}
\address{Data Science Institute, University of Chicago}
\email{mduchin@uchicago.edu}

\author{Kristopher Tapp}
\address{Department of Mathematics, Saint Joseph's University}
\email{ktapp@sju.edu}

\begin{document}
\begin{abstract}
In this paper, we develop the metric geometry of ranking statistics, proving that the two major permutation distances in the statistics literature---Kendall tau and Spearman footrule---extend naturally to incomplete rankings with both {\em coordinate embeddings} and {\em graph realizations}.
This gives us a unifying framework that allows us to connect popular topics in computational social choice:  metric preferences (and metric distortion), polarization, and proportionality.

As an important application, the metric structure enables efficient identification of {\em blocs} of voters and {\em slates} of their preferred candidates. Since the definitions work for partial ballots, we can execute the methods not only on synthetic elections, but on a suite of real-world elections.  This gives us robust clustering methods that often produce an identical grouping of voters---even though one family of methods is based on a Condorcet-consistent ranking rule while the other is not.
\end{abstract}
\maketitle

\vspace{1cm}
\setcounter{tocdepth}{2}
\tableofcontents

\newpage
\section{Introduction}\label{S:intro}

A major research direction in computational social choice theory has been the study of metric preferences:  both candidates and voters are thought to be embedded in a common (unobserved) metric space, and the ``cost'' of a candidate is the sum of their distances to the voters.  In this framework, each metric embedding has an optimal winner (the lowest-cost candidate); however, many metrics are consistent with a given preference profile that records the ordinal preferences of the voters, and the identity of the optimal winner may vary.  Numerous works have studied the {\em metric distortion} for voting rules, defined as the worst-case cost ratio over all metric realizations.  


A second strand of research has aimed to provide descriptive measures of preference, such as by measuring the level of {\em polarization} for a ranked choice election,  the level of preference {\em diversity}, or the degree of {\em proportionality} for multi-winner elections.  All of these have natural interpretations in the metric framework presented here.

The purpose of this paper is to 
build out the metric theory for elections, not through a worst-case analysis but by a close study of two choices of metric.
As an application, we obtain natural methods for grouping similar voters and candidates, respectively, and identifying them as voter {\em blocs} and candidate {\em slates}.  
We test the methods on synthetic preference profiles and on a real-world dataset of over 1000 ranked-choice local government elections in Scotland from 2012 to 2022, which feature five major parties and dozens of minor ones.
Because our methods do not make use of party labels, we are able to identify patterns that might be otherwise missed by party-focused analysis.  

By a {\em ballot}, we mean a complete or partial ranking of $m$ candidates.  (In places, we also consider the fuller setting of weak rankings, allowing arbitrary ties.)
We will introduce coordinate embeddings that correspond to the two most important notions of ranking distance in the statistics literature, defined for complete and partial ballots.
A {\em preference profile} is a finite set of ballots.

The first metric embedding assigns a ballot to a point in $\R^m$ by taking each candidate's coordinate to be the reverse of their ranking, so that a candidate earns a score of $m-j$ from a voter who ranks them $j$th.  Because this system of scoring is sometimes called (standard) {\em Borda points} in the context of voting, we call this the {\em Borda embedding}.  An alternative system of coordinates maps a ballot to $\{-1,0,1\}^{\binom m2}\subset \R^{\binom m2}$, where each pairwise comparison is recorded as favoring one of the two candidates ($\pm 1$) or as a tie ($0$).  We call this the {\em head-to-head embedding}. We then define the distances $d_B$ and $d_H$ between ballots as half the $L^1$ distance between the corresponding vectors.  These recover  Spearman footrule and Kendall tau  (also known as swap distance, or bubble sort distance), respectively.  A preference profile sits as a point cloud in the associated coordinate space.  An optimal center under the $L^1$ metric in the head-to-head embedding is precisely the Kemeny ranking, and $k$-clustering in this setting is the $k$-Kemeny problem.
In the Borda embedding, it is the $L^2$ minimizer that would recover the usual Borda ranking rule---the coordinates record average Borda score for candidates.  $L^1$ minimizers give a Borda median ranking; this has been studied under the heading of rank aggregation but not fully embraced in social choice theory as a voting rule.  

As an alternative to coordinate embeddings, we realize $d_B$ and $d_H$ as graph metrics.  We define a sparse graph called the {\em ballot graph} whose nodes are partial rankings and whose path metric recovers $d_H$; by adding edges, we obtain a {\em shortcut ballot graph} that recovers $d_B$.  

There are several main contributions, including:
{\bf Extension of metric results.}  Not only complete rankings, but partial rankings, candidates, and slates all receive assigned locations in a common space.
{\bf Graph realizations.}  We build graphs that incorporate partial rankings, with results connecting them to the principal ranking metrics (Kendall and Spearman).  Notably, the Borda graph realization works with some partial-ballot conventions and not others.
{\bf Empirical methods.}  The incorporation of partial ballots lets us deploy our methods on real elections.  We use both synthetic and real-world elections for validation, and show empirical robustness to modeling choices.

Sometimes, the extension of complete-ranking results to partial rankings can be done with essentially routine constructions.  But that is not the case here, where the arguments are far from trivial and the extension to partial rankings forces certain choices that are not obvious; this can be thought of as revealing structure that was already present in the setting of full permutations.

Overall, we develop fundamental tools in the geometry of rankings, apply these tools to the unsupervised learning problem of detecting blocs and slates, and show that we get stable and interpretable results.  
All data and code used in this paper are available at \url{https://github.com/mggg/ballot-clustering}

\subsection{Related work}
Well-studied metrics on full permutations (i.e., complete ballots) include the {\em Spearman footrule}  and the {\em Kendall tau distance} 
\cite{Kendall_1970,Kendall_Gibbons_1990}.  
The classic paper of \citet{Diaconis_Graham_1977} calls them $D$ and $I$, respectively, and establishes that the two metrics are mutually bounded.
These agree with our metrics via $d_H=I$ and $d_B=\frac 12 D$.
In our notation, the (sharp) bounds are
$\frac 12 d_H \leq d_B \leq d_H.$
The same bounds were generalized to partial permutations of fixed length (that is, top-$k$ rankings) by~\citet{Fagin_Kumar_Sivakumar_2003} and then to more general ballots by a larger set of co-authors in \cite{Fagin_Kumar_Mahdian_Sivakumar_Vee_2006}.
These papers employ the \emph{averaged} Borda convention for ballots that are short or allow general ties, awarding the candidates in a tied ranking the average of the Borda points they would receive in any resolution of ties.  We consider an alternative, sometimes called the  
\emph{pessimistic} Borda convention, that awards the fewest points over all resolutions---i.e., zero points for all unmentioned candidates \cite{Kamwa_2022}.
Not only do the Diaconis--Graham bounds extend to pessimistic Borda distance on partial ballots, but that metric is well adapted to a sparse graph realization, whereas averaged Borda is not.  The idea of a ballot graph has been used by several authors; see for example the beautiful illustrations in~\cite{Puppe_2018}. However, to our knowledge, previous work is limited to $d_H$ distances between complete ballots.

An application of these metrics in the literature that closely aligns with our centroid-based clustering methods is what is known as the \emph{rank aggregation} problem.  In the context of rank aggregation, the center is required to be a full ranking (whose summed distance to the input rankings is minimized) and is viewed as "consensus" ranking \cite{Dwork2001RankAggregation,Van_Zuylen_Williamson_2009}, so that rank aggregation can be viewed as a voting method.  For example, \cite{Fishburn_1977} explores which Arrow-style axioms are satisfied by various methods of rank aggregation.  In our work, we find it productive to use a slightly different interpretive framework: the center of each cluster is a useful summary statistic of the voting behavior within that cluster, whether or not it is a ranking and whether or not the voters are closely concentrated nearby.

We connect to the extensive literature on metric preferences and distortion in social choice, surveyed in \cite{anshelevich2021distortion}.
The identification of blocs and slates will give strong grounding for the measurement of proportionality, diversity, and polarization, which are topics of longstanding interest (and current activity) in computational social choice \cite{skowron2017proportional,israel2025dynamic,hashemi2014measuring,karpov2017preference}.

For \emph{polarization} in particular, axiomatic developments are found in classic economics papers like ~\cite{Esteban_Ray_1991} and~\cite{can2015polarization}.  The work most closely aligned with ours is~\cite{Faliszewski_Kaczmarczyk_Sornat_Szufa_Wąs_2023}, whose authors define diversity and polarization indices for an election (with complete rankings) in terms of the $k$-clusterings that minimize the ballots' summed $d_H$-distances to $k$ centers.  With our tools, their definitions can be extended to partial ballots and executed at scale.
The authors of~\cite{Busse_Orbanz_Buhmann_2007} cluster the ballots of an election by fitting the parameters of a Mallows model.  By contrast, we identify clusters without assuming that the ballots follow any particular distribution.


\section{Ballot graphs and coordinate embeddings}\label{S:ballotgraph}\label{S:coords}

Let $\Omega_m$ be the set of possible partial or complete ballots in an election with $m$ candidates. The candidates can be denoted $\{A,B,\dots \}$ or $\{A_1,\dots,A_m\}$.  We will adopt flexible notation for ballots, so that 
$$(A,C,D,B),\quad  A>C>D>B,\quad  ACDB,\quad  \hbox{\rm and} \quad 
\begin{psmallmatrix}A\\ C\\ D\\ B\end{psmallmatrix}$$ 
all denote the same ballot.  We will record partial ballots as having unmentioned candidates tied at the end of the ballot, so that 
$(A,B)$ or $(A,B,-,-)$ is identified with $(A,B,\{C,D\})$.
Partial ballots therefore only allow ties at the end of the ballot;
 {\em generalized ballots} would allow ties in arbitrary positions.

\subsection{Coordinate embeddings}\label{subsec:coord}

\begin{definition}[\textbf{Borda and head-to-head distance}]
Suppose $\sigma$ is a permutation of the symbols $1,\dots,m$, 
with the convention that $\sigma$ maps a candidate index to the rank of that candidate on the ballot, 
i.e., for candidates $A_1,\dots,A_m$, we have that $\sigma(i)$ is the position in which the voter ranked $A_i$.
In this notation, the ranking corresponding to $\sigma$ is 
$\begin{psmallmatrix}
    \sigma^{-1}(1)\\ \vdots\\ \sigma^{-1}(m)\end{psmallmatrix}.$ 

Extend this to partial ballots 
by the \emph{pessimistic}
convention: if $j$ is an unlisted candidate, then $\sigma(j)=m$. 
In places, we will also refer to the \emph{averaged} convention whereby $\sigma(j)$ equals the average of the rankings that all of the unlisted candidates would have received if they had been ranked.  Averaging maintains the total sum of ranks for any ballot, complete or partial, as $\sum_i \sigma(i)=\binom{m+1}{2}$, while the pessimistic convention lets that sum vary---in other words, embedding with the averaged convention sends all ballots onto a common hyperplane.

Define functions $\bb:\Omega_m\to \R^m$ and 
$\hh: \Omega_m\to\R^{\binom m2}$ by
$$\bb(\sigma)(i)=m-\sigma(i), \qquad 
\hh(\sigma)(\{i,j\})= \begin{cases}
    +1 & \sigma(i)>\sigma(j)\\
    0 & \sigma(i)=\sigma(j) \\
    -1 & \sigma(i)<\sigma(j)
\end{cases} \,\,\,(\text{assuming }i<j).$$
Then the Borda distance and head-to-head distance are defined by
$$d_B(\sigma,\tau)=\frac 12 \|\bb(\sigma)-\bb(\tau)\|_1, \quad
d_H(\sigma,\tau)=\frac 12 \|\hh(\sigma)-\hh(\tau)\|_1.$$
\end{definition}


\begin{example}
In $\Omega_4$, the ballot $AD$ is  mapped to the Borda point 
$(3,0, 0, 2)$.  Since the $6$ head-to-head comparisons are $AB,AC,AD,BC,BD,CD$, the ballot $AD$ has the head-to-head image $(1,1,1,0,-1,-1)$.  
\end{example}

\begin{figure}[bht!]\centering
\includegraphics[width=3in]{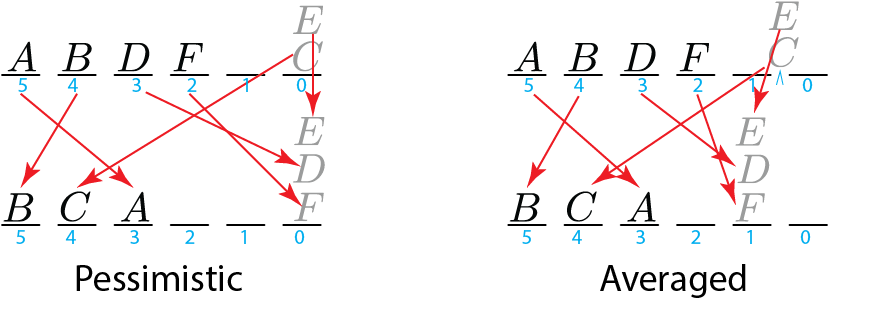}
\caption{Under the pessimistic convention, the total shifts to convert $(A,B,D,F)$ to $(B,C,A)$ sum to 12, while they sum to 10 in the averaged convention; the Borda distances would be 6 and 5, respectively.}\label{F:shifts}
\end{figure}

Rank differences can be visualized as positional shifts, as illustrated in Figure~\ref{F:shifts}.

The head-to-head embedding uses an order on candidates, whether alphabetical or an arbitrary fixed order, and the value $+1$ means that the first-indexed candidate is ranked higher.
For complete ballots, $d_H$ is exactly the Kendall tau distance, counting neighbor swaps between permutations;  for short ballots, $d_H$ agrees with the generalization of the Kendall tau metric discussed in~\cite{Fagin_Kumar_Mahdian_Sivakumar_Vee_2006}.  

\subsection{Ballot graphs}
Any finite metric space can be given a graph realization in a trivial way: by adding an edge between each pair $x,y$ of vertices with weight $d(x,y)$.  
(The triangle inequality ensures that the path metric on that graph is $d$.)
Thus, for a graph realization to add value, it should be relatively sparse in terms of the edges needed to recover the metric as the path metric.  In this section we introduce
{\em ballot graphs} that are sparse in the sense that, though the number of nodes 
will grow as roughly $2m!$ for the graph on $m$ candidates, the degree of each vertex will be only polynomial in $m$.

\begin{definition}[\textbf{Ballot graphs}]
The {\em ballot graph} $\G_m$ is a weighted, undirected graph with possible ballots as the vertex set ($V=\Omega_m$).  
Since we treat unmentioned candidates as being tied at the end of a ballot, we will identify each ballot of length $m-1$ with its extension to a complete ballot.
There are two types of edges.
\begin{itemize}
\item Neighbor swaps: $(\sigma,\tau)\in E$ 
whenever $\sigma^{-1}\tau$ is an adjacent transposition $(i,i+1)$. These edges are given unit weight.
\item Truncation and extension: 
when $\sigma$ and $\tau$ have length $k-1$ and $k$ and agree in the first $k-1$ positions, then 
$(\sigma,\tau)\in E$ 
is given weight $\frac{m-k}2$.  
\end{itemize}
The {\em shortcut ballot graph} $\mathcal G_m^+$ adds a more general swap edge.
\begin{itemize}
    \item General swaps: $(\sigma,\tau)\in E$  when
    $\sigma^{-1}\tau$ is a transposition $(i,i+k)$ for some $i,k$, and its weight is set to $k$.  In other words, these edges correspond to swaps of candidates in not-necessarily-successive positions, at a cost that is equal to the difference in rank. 
\end{itemize}
\end{definition}

Note that neighbor swaps are a special case of general swaps.
We will use the notions of edge weight, edge length, and the "cost" of a move interchangeably.  Treating edge weights as lengths lets us study the induced path metric:  the distance between two ballots is equal to the length of the shortest edge-path between them.

\begin{example}[Illustrating ballot graph construction] \

\begin{itemize}
    \item The ballot graph $\G_3$ is shown in Figure~\ref{fig:G3}.  For instance, the distance from $ABC$ to $ACB$ is one because they differ by a neighbor swap.
\item In $\G_4$ (Figure~\ref{fig:G4}), the distance from $ABCD$ to its reversal $DCBA$ equals $6$.  Note that this can either be realized through a path of 6 neighbor swaps or through a path that goes through bullet votes:
$$ABCD\rightarrow ABC\rightarrow AB\rightarrow A \rightarrow
 AD \rightarrow DA\rightarrow D \rightarrow DC \rightarrow DCB \rightarrow DCBA.$$
 (That these two kinds of paths have equal length generalizes to connecting any complete ballot and its reversal in any $\G_m$.)  In $\G_4^+$, the distance drops to $4$ with the use of a shortcut edge $ABCD\to DBCA$, whose length is 3, plus one additional neighbor-swap.
\item In $\G_6$, the edge from $(A,B)$ to $(A,B,C)$ has weight $\frac{3}{2}$.  
\end{itemize}
\end{example}

\begin{figure}[bht!]\centering
\begin{tikzpicture}
\node at (0,0) {\includegraphics[width=2.5in]{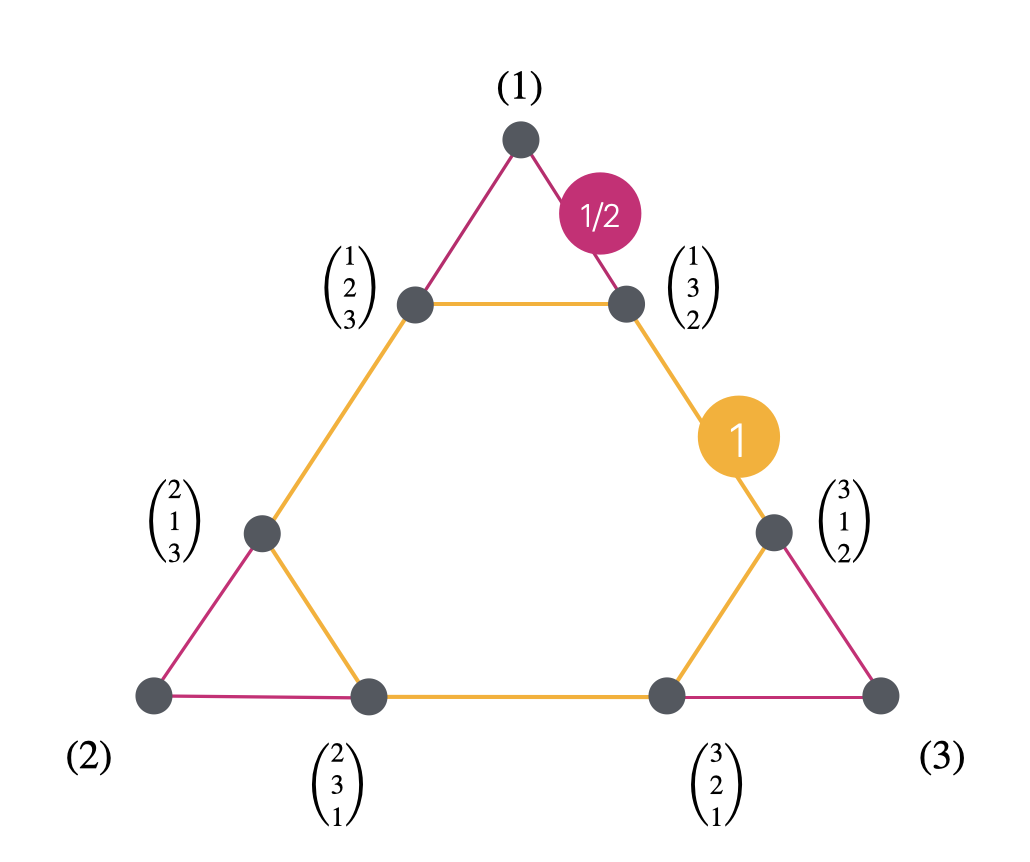}};
\node at (6.5,0) {\includegraphics[width=2.5in]{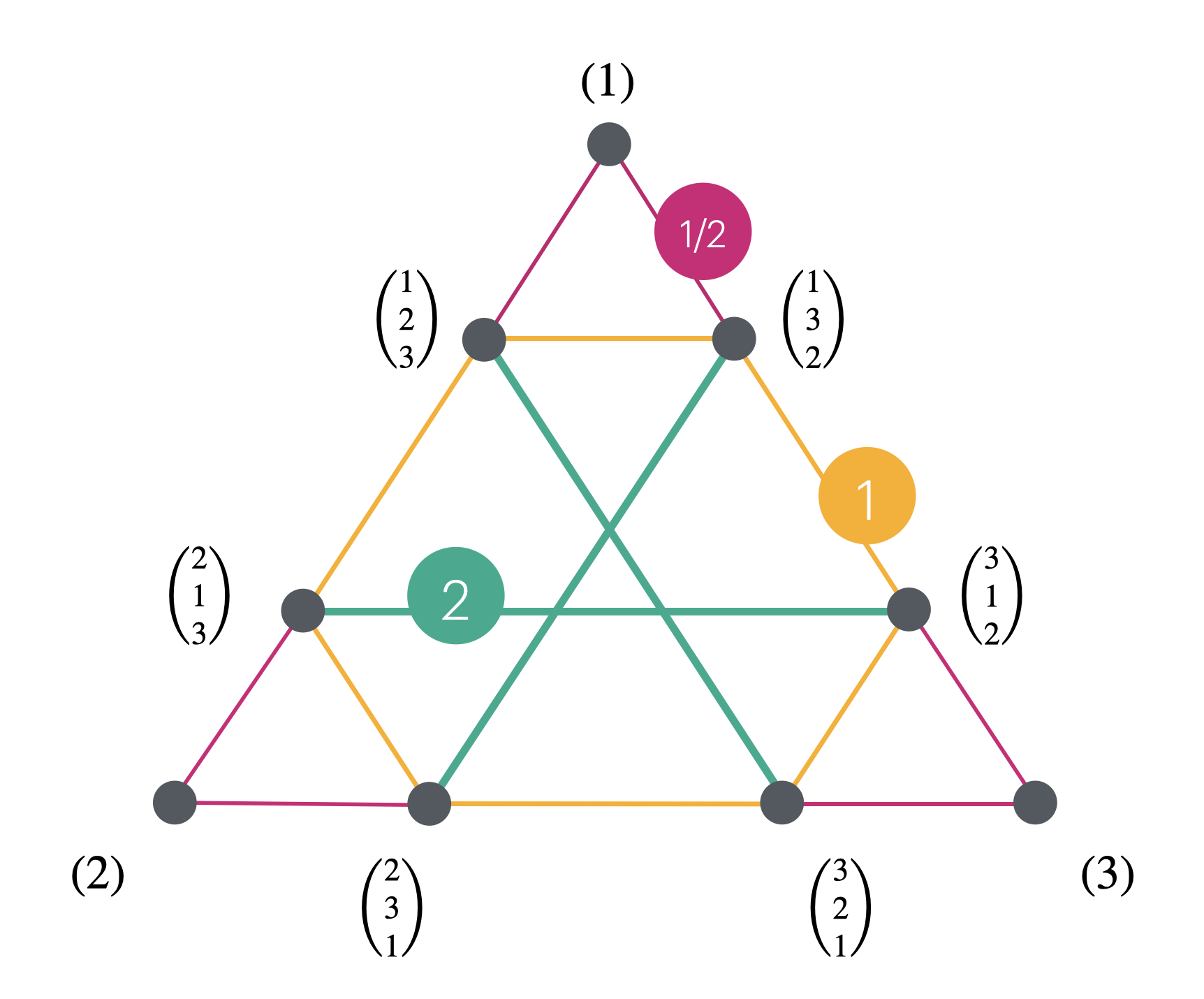}};
\end{tikzpicture}

\caption{The basic ballot graph $\G_3$ and shortcut ballot graph $\G_3^+$.  Swaps of the first and last place correspond to new edges of length 2, providing shortcuts between nodes that would otherwise be 3 apart.}\label{fig:G3}
\end{figure}

\begin{figure}[bht!]\centering
\begin{tikzpicture}
\node at (0,0) {\includegraphics[width=5in]{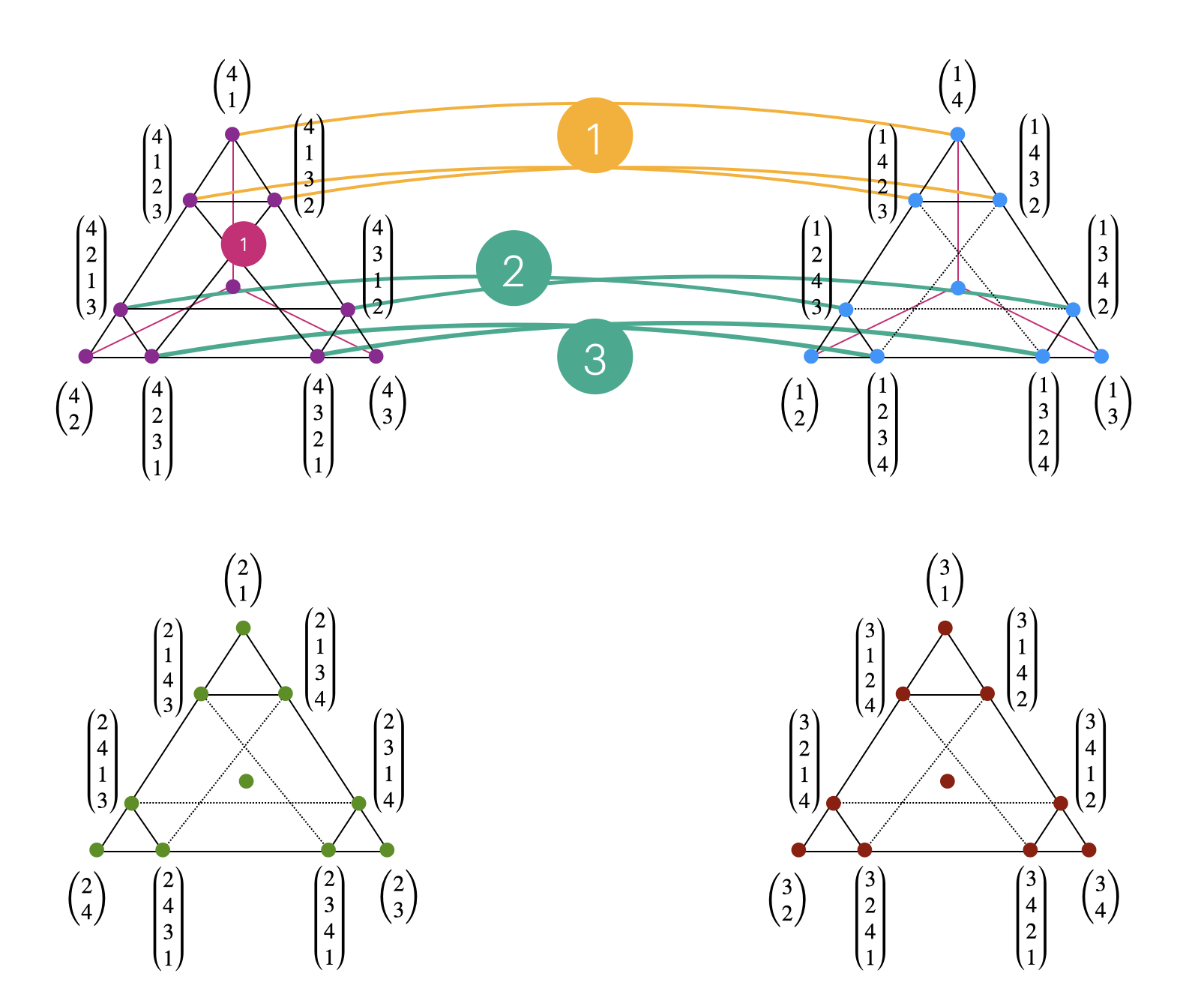}};

\end{tikzpicture}

\caption{An illustrative portion of the shortcut ballot graph $\G_4^+$, showing the connections between ballots headed by candidates 4 and 1.  The construction of this picture shows the recursion $|\Omega_m|=m\cdot|\Omega_{m-1}|+m$. }\label{fig:G4}
\end{figure}

\begin{proposition}
In either $\G_m$ or $\G_m^+$,  size of the vertex set is $O(m!)$ and the degree of each vertex is $O(m^2)$.  
\end{proposition}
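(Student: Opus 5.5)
The plan is to count the vertices and the edges at a vertex separately, using only the definitions of $\Omega_m$, $\G_m$ and $\G_m^+$.

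\emph{Vertex count.} A ballot is an ordered list of $k$ distinct candidates, where $1\le k\le m$. Ballots of length $m-1$ are identified with complete ballots, so we may take $k\neq m-1$ and avoid double counting. The number of ordered lists of length $k$ is $m!/(m-k)!$. Hence
\[
|\Omega_m|\le \sum_{k=1}^{m}\frac{m!}{(m-k)!}=m!\sum_{j=0}^{m-1}\frac1{j!}<e\cdot m!,
\]
which is $O(m!)$. Alternatively, the recursion $|\Omega_m|=m|\Omega_{m-1}|+m$ noted in Figure~\ref{fig:G4} gives $|\Omega_m|/m!=|\Omega_{m-1}|/(m-1)!+1/(m-1)!$. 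Summing this telescoping relation gives the same bound $|\Omega_m|/m!\le e$.

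\emph{Degree in $\G_m^+$.} Since $\G_m\subseteq\G_m^+$, it suffices to bound degrees in $\G_m^+$. Fix a ballot $\sigma$ of length $k$, and bound the number of neighbors of each edge type.
\begin{itemize}
\item A swap edge (neighbor or general) is determined by an unordered pair of positions $\{i,i+k'\}$ whose entries are interchanged. Positions are indices from $1$ to $m$, so there are at most $\binom m2$ such edges.
\item A truncation edge deletes the last listed candidate, so there is at most one.
\item An extension edge appends one of the at most $m-k$ unmentioned candidates in position $k+1$, so there are at most $m$.
\end{itemize}
The degree is therefore at most $\binom m2+m+1=O(m^2)$.

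The only point needing care is how swaps act on partial ballots, since unmentioned candidates are tied at the end. Whatever convention is used, each swap edge is still indexed by a pair of positions in $\{1,\dots,m\}$, so the count of $\binom m2$ stands. Likewise, the identification of length-$(m-1)$ ballots with complete ballots only merges vertices, so it can only reduce the counts above. I expect no step to be a real obstacle; both parts follow directly from the definitions.
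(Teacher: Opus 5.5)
Your proposal is correct and follows essentially the same route as the paper: the vertex count bounds $\sum_k m!/(m-k)!$ by $e\cdot m!$, and the degree bound counts truncation, extension, and swap neighbors separately. The paper's count is slightly finer: it shows a length-$k$ ballot has exactly $k-1$ swap neighbors in $\G_m$ (so $\G_m$ is essentially $m$-regular) and $\binom k2$ in $\G_m^+$, whereas you bound everything at once via $\G_m\subseteq\G_m^+$, which suffices for the $O(m^2)$ claim.
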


\begin{proof}
The number of complete or partial rankings of $m$ objects, including the empty ranking, is $\lfloor em!\rfloor$.  This can be seen by counting partial rankings as $\sum_{k=0}^m \frac{m!}{(m-k)!}=m!\sum_{j=0}^m \frac{1}{j!}$ and comparing to $e=\sum_{j=0}^\infty \frac{1}{j!}$.  

Because we identify ballots of length $m$ with their truncations of length $m-1$, the number of vertices grows like $(e-1)m!$.  If a vertex represents a ballot of length $1<k<m-1$, it has one  neighbor by truncation, $m-k$  neighbors by extension, and $k-1$ swap neighbors, giving degree $m$ in $\G_m$.  Complete ballots have one neighbor by truncation (of length $m-2$) and $m-1$ swap neighbors, for the same degree.  Bullet votes have $m-1$ extensions; if we include the empty ballot in the graph, their degree is also $m$ (making $\G_m$ an $m$-regular graph), and otherwise it is $m-1$.

In the shortcut graph, the number of swap neighbors for the ballot of length $k$ is $\binom k2$ instead of $k-1$, which contributes less than $\frac 12 m^2$ to the degree.
\end{proof}

These graphs extend important mathematical objects that occur in group theory and combinatorics.  Restricting $\G_m$ to the complete ballots recovers the Cayley graph of the symmetric group $S_m$ (with standard transposition generators), which shows up in Figure~\ref{fig:G3} as a $6$-cycle in $\G_3$.  Adding shortcut edges amounts to adding additional generators.  
Another way to phrase this is that the 1-skeleton of the permutahedron---a polytope used to track the structure of permutations---sits in each $\G_m$. Using this kind of mathematical structure allows us to phrase the results of elections as distributions on a graph or network.\footnote{There are many deep open questions about these:  for instance, the Babai conjectures relate to diameter bounds and spectral gaps for Cayley graphs of $S_m$ as the generators vary.}

\subsection{Generalized ballot graphs}\label{sec:gen_graph}
Next, consider the case that ballots are allowed to include general ties.  For example, with $7$ candidates, the \emph{generalized ballot} 
$$ (C,\{A,D\},\{B,F,G\},E)$$
reflects  equal preference for $A$ and $D$, and for $B$, $F$, and $G$.
This allows us to write a preference between slates:  if 
$\mathcal C=\mathcal A \sqcup \mathcal B$ is a partition of the set $\mathcal C$ of candidates into slates, 
then the preference $\mathcal A>\mathcal B$ 
can be considered as a generalized ballot.
Without loss of generality, every candidate appears once in a generalized ballot;
complete and partial ballots are both special cases, since partial ballots were already interpreted as having a tie in the last position.
Let $\WO_m$ denote the set of generalized ballots in an election with $m$ candidates.  

The \emph{generalized ballot graph} $\WG_m$ is a graph on these nodes. There is an edge between two nodes whenever one ballot is obtained from the other by merging the  candidate sets in adjacent positions, and the length of that edge is  $\frac 12$ times the product of their orders.  This simple edge rule correctly generalizes both edge rules for $\G_m$. 

\begin{example}\label{E:yaya}
The graph $\WG_4$ contains the unit-length path
$$(A,B,C,D)\stackrel{.5}{\rightarrow}(A,\{B,C\},D)\stackrel{.5}{\rightarrow}(A,C,B,D).$$
Swapping  any pair of adjacent singleton candidates similarly has cost 1.  
\end{example}

We enlarge the domain of $\hh$ to 
$\WG_m$ accordingly:  as before, its ${\binom m2}$ coordinates represent ordered pairs of candidates, and the value of each coordinate is $-1$ (lose), $0$ (tie), or $1$ (win).  Here a tie means that the candidates belong to the same candidate set.  

Note that these generalized ballot graphs have a super-factorial number of nodes (counted by the Fubini numbers) and now have exponentially growing degree. That is because each generalized ballot has at most $m-1$ ways to merge neighboring candidate sets, but if a candidate set has $k$ elements then there are $2^k$ ways to split it, so for instance a bullet vote has many neighbors.  

\subsection{Relating the ballot graphs to coordinate embeddings}

\begin{theorem}[Graphs match $L^1$ distances]\label{T:L1}
The distance $d_H$ is realized as the path metric on the basic ballot graph $\G_m$, while $d_B$ is realized as the path metric on the shortcut ballot graph $\G_m^+$.  
\end{theorem}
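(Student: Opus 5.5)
The plan is to prove matching lower and upper bounds. Write $d$ for $d_H$ on $\G_m$ or for $d_B$ on $\G_m^+$, and let $\rho$ be the corresponding path metric.

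\emph{Lower bound, $\rho\ge d$.} I would check that every edge has length exactly equal to the $d$-distance between its endpoints; the triangle inequality then gives $d\le\rho$.
\begin{itemize}
\item A neighbor swap changes one head-to-head coordinate from $\pm1$ to $\mp1$, so $d_H=1$. It changes two Borda coordinates by $1$ each, so $d_B=1$.
\item A truncation/extension edge between lengths $k-1$ and $k$ only affects the candidate $c$ in position $k$. Under the pessimistic convention its Borda coordinate goes from $0$ to $m-k$, so $d_B=\frac{m-k}{2}$. In the head-to-head embedding, exactly the $m-k$ pairs of $c$ with still-unlisted candidates change from $0$ to $\pm1$, so $d_H=\frac{m-k}{2}$.
\item A general swap $(i,i+k)$ changes two Borda coordinates by $k$ each, so $d_B=k$.
\end{itemize}
For $d_H$ we only use $\G_m$, so no further edge checks are needed. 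This step is routine.

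\emph{Upper bound, $\rho\le d$.} Both distances are half-$L^1$ distances, so I would use the standard $L^1$ geodesic criterion. If an edge $\sigma\to\sigma'$ moves every coordinate of the embedding monotonically toward the corresponding coordinate of the target $\tau$, with no overshoot, then
$$d(\sigma',\tau)=d(\sigma,\tau)-d(\sigma,\sigma').$$
So it suffices to prove a \emph{descent lemma}: for every $\sigma\neq\tau$ there is an edge at $\sigma$ that is coordinatewise monotone toward $\tau$. Distances lie in $\frac12\mathbb{Z}_{\ge0}$ and strictly decrease along such edges, so iterating the lemma produces a path of length exactly $d(\sigma,\tau)$.

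\emph{Descent lemma for $d_H$.} Let $\sigma$ have length $k$ and $\tau$ have length $\ell$. I would split into the following cases.
\begin{enumerate}
\item If $\sigma$ contains an adjacent pair of listed candidates that $\tau$ orders strictly the opposite way, swap them. This moves only that coordinate, from $\pm1$ to $\mp1$.
\item Otherwise, if the last listed candidate $c$ of $\sigma$ is unlisted in $\tau$, or more generally is tied in $\tau$ with some candidates that $\sigma$ leaves unlisted, truncate $c$. The coordinates move from $\pm1$ to $0$, and I need to check that each of these is toward $\tau$.
\item Otherwise, extend $\sigma$ by the candidate that $\tau$ ranks highest among those unlisted in $\sigma$. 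Each affected pair goes from $0$ to the sign it has in $\tau$.
\end{enumerate}
The delicate point is showing that case 2 is monotone in every coordinate it touches. Truncating $c$ sets all pairs $(c,x)$ with $x$ unlisted in $\sigma$ to $0$. This is toward $\tau$ only if $\tau$ does not rank $c$ strictly above such $x$, unless $\tau$ ranks $c$ below $x$, in which case passing through $0$ is still monotone. I expect to need to refine the case split, for example by truncating only when $c$ is not ranked by $\tau$ strictly above all of $\sigma$'s unlisted candidates, and otherwise using a swap or an extension. Making this exhaustive is the main obstacle.

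\emph{Descent lemma for $d_B$.} The argument is analogous with Borda coordinates. If two listed candidates have Borda scores on the wrong side of their targets in opposite directions, a general swap of them moves both toward $\tau$. Choosing the pair with the smallest rank gap avoids overshoot. Otherwise I would truncate the last listed candidate when its score exceeds its target, and extend by a candidate whose target score is at least $m-k-1$. I would try to show that one of these options always applies, using the fact that the pessimistic convention makes the unlisted coordinates all $0$. This is where the pessimistic convention, rather than the averaged one, should be essential.
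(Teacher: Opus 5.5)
Your lower bound is correct, and reducing the upper bound to a descent lemma is legitimate. Note, though, that by the equality case of the $L^1$ triangle inequality the descent lemma is \emph{equivalent} to the upper bound. It therefore carries the whole content of the theorem, and you leave it unproved in both settings.

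For $d_H$ you have also misplaced the difficulty. Your case~2 as written is already monotone. In a partial ballot, ties occur only among unlisted candidates, so case~2 just says that $c$ is unlisted in $\tau$. Then $\tau$ ranks $c$ weakly below every other candidate, so each pair $(c,x)$ moves from ``$c$ preferred'' to a tie, which lies between the old value and $\tau$'s value. (The correct truncation condition is that $\tau$ ranks $c$ strictly above \emph{none} of $\sigma$'s unlisted candidates; case~2 guarantees this.) What is genuinely missing is exhaustiveness. The needed observation is this: suppose $\sigma=(a_1,\dots,a_k)$ has no adjacent pair reversed by $\tau$ and $a_k$ is listed in $\tau$. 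Then $a_{k-1}$ cannot be unlisted in $\tau$, since otherwise $\tau$ would rank $a_k$ strictly above it, which is a reversal. So $\tau$ lists $a_{k-1}$ above $a_k$, and inductively $\tau$ lists $a_1,\dots,a_k$ in that order. Hence if cases~1 and~2 fail and $\sigma\neq\tau$, some candidate unlisted in $\sigma$ is listed in $\tau$. Case~3 needs exactly this fact. Your claim that each affected pair goes to ``the sign it has in $\tau$'' fails if the $\tau$-highest $\sigma$-unlisted candidate is itself unlisted in $\tau$: those signs are then $0$, and the extension moves away from $\tau$.

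For $d_B$ the plan contains an outright error. Truncating $c$ sends its pessimistic score from $m-k$ all the way to $0$. It is therefore coordinatewise monotone only when $\tau$ gives $c$ zero points, not whenever its score exceeds its target. Take $m=4$, $\sigma=(A,B)$, $\tau=(A,C,B)$. The Borda vectors are $(3,2,0,0)$ and $(3,1,2,0)$, so $d_B=\tfrac32$. No swap applies, and $B$'s score $2$ exceeds its target $1$, so your rule truncates to $(A)$. That costs $1$ and leaves $d_B((A),\tau)=\tfrac32$, for a total of $\tfrac52$. The actual geodesic extends by $C$ (cost $\tfrac12$) and then swaps $B,C$. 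Even with the corrected truncation condition, you still owe an argument that some monotone move always exists.

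The paper avoids all such case analysis with explicit waypoints. Let $\sigma'$ extend $\sigma$ by the candidates listed only in $\tau$, in $\tau$'s order, and define $\tau'$ symmetrically; then $\sigma'$ and $\tau'$ rank the same set of candidates. The extension and truncation legs cost $\tfrac12\wk(\sigma,\tau)$. The middle leg uses neighbor swaps at adjacent inversions for $d_H$, costing $\str(\sigma,\tau)$, or swaps of the first $\uparrow$ with the last $\downarrow$ preceding it for $d_B$, so that every candidate moves monotonically toward its target.
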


To prove this, it is convenient to introduce notions of strong and weak disagreement.
We say that ballots $\sigma,\tau$ have a strong disagreement on candidates $i,j$ if one candidate is ranked higher on $\sigma$ while the other is ranked higher on $\tau$; a weak disagreement occurs when one ballot expresses a preference where the other ballot records a tie.  Then for a pair of ballots $\sigma,\tau$ we can define 
$\str(\sigma,\tau)$ and $\wk(\sigma,\tau)$ as the total number of strong and weak disagreements, respectively.  
For example, if $\sigma=(A,B,C)$ and $\tau=(A,E)$ in $\Omega_5$, then $\str(\sigma,\tau)=2$ and $\wk(\sigma,\tau)=4$.  
Since a strong disagreement occurs when a coordinate of the head-to-head vector is $+1$ in one ballot and $-1$ in the other, it contributes 2 to the $L^1$ difference; similarly, a weak disagreement contributes 1.  It is therefore immediate that 
$$d_H=\str+\frac 12 \wk$$
for any pair of ballots.  To prove that the graph distance matches $d_H$, it suffices to describe graph distance in terms of strong and weak disagreements.

\begin{proof} We begin with $d_H$.  For a pair of ballots $\sigma,\tau$ that are adjacent in $\G_m$, we verify that the weight of the edge between them equals 
$w(\sigma, \tau) =\str+\frac 12 \wk = d_H(\sigma,\tau)$ by checking the cases of neighbor-swap and extension.  Now  consider a geodesic in $\G_m$ visiting vertices $\sigma_1,\dots,\sigma_r$ and note that 
the graph distance equals $\sum w(\sigma_i,\sigma_{i+1})$ by definition of geodesic.  On the other hand, $d_H(\sigma_1,\sigma_r)$ is less than or equal to this sum, because $d_H$ is defined as an $L^1$ difference, so it satisfies the triangle inequality.  Thus $d_H$ is less than or equal to  the graph distance.

To show that this is an equality, we construct a path realizing the distance equality between arbitrary ballots $\sigma,\tau$.
To do this, we construct "waypoints":  if $\sigma$ is a partial ballot, let $\sigma'$ be the possibly longer ballot which agrees with $\sigma$ on all listed candidates, then ranks candidates that were mentioned in $\tau$ but not $\sigma$ in the order in which they appear in $\tau$; likewise, $\tau'$ starts like $\tau$ but adds missing candidates in the order of appearance in $\sigma$.  

Consider the path from $\sigma$ to $\sigma'$ via extensions, continuing to $\tau'$ by a minimal sequence of neighbor swaps, then extending to $\tau$ by truncation.
To see that this suffices, note that for two distinct ballots that are complete (or more generally that have the same candidate set), there must be at least one neighboring pair of candidates that witnesses the strong disagreement.  Otherwise, every $X$ ranked above $Y$ by the first ballot would also be ranked above $Y$ by the second ballot, forcing the ballots to be identical.  This means that there is always a neighbor swap that reduces the strong disagreement count by $1$, until that count is zero.  Thus the length of the path $\sigma'\rightarrow\tau'$ is $\str(\sigma,\tau)$.  

Suppose there are $k$ candidates mentioned in $\tau$ but not $\sigma$ and $l$ candidates mentioned in neither ballot.
Then the path distance from $\sigma$ to $\sigma'$
is  $\frac{l+\dots+(l+k)}{2}= \frac 12 \left(\binom{k}{2}+kl\right)$.
The number of weak disagreements involving those candidates is $\binom{k}{2} + kl$.  Similarly between $\tau$ and $\tau'$ the total edge weight is one-half of the number of weak disagreements involving candidates unmentioned in $\tau$ but mentioned in $\sigma$.  
Taken together, we've shown that the natural path $\sigma\to\sigma'\to\tau'\to\tau$ has total edge weight $\str(\sigma,\tau)+\frac 12 \wk(\sigma,\tau)$, as desired.  

Next, to prove that $d_B(\sigma,\tau)$ equals the path distance in $\G^+_m$, we use the same waypoints $\sigma',\tau'$ and the same path segments $\sigma\rightarrow\sigma'$ and $\tau\rightarrow\tau'$, whose lengths also realize the $d_B$-distance.  

It remains to construct a $d_B$-geodesic $\sigma'\rightarrow\tau'$ in $\G_m^+$; that is, a path in $\G_m^+$ that realizes the $d_B$ distance.  Note that a path from $\sigma'$ to $\tau'$ is a $d_B$-geodesic if and only if each candidate's Borda score changes monotonically between its starting score in $\sigma'$ and ending score in $\tau'$.  From its position in $\sigma'$, every candidate needs to shift either to a lower position ($\downarrow$), higher ($\uparrow$) or neither ($\square$) to reach its target position in $\tau'$.  We represent this as a {\em state vector} as in Figure~\ref{fig:statevector}.
We proceed in steps, each time swapping the candidates corresponding to the first $\uparrow$ symbol and the last $\downarrow$ symbol before it.   Repeating this process until reaching $\tau'$ produces a Borda geodesic because each candidate can only move toward its target position; the move never overshoots the target position because the candidates in between those who swapped were already in their final positions.
\end{proof}

\begin{figure}[bht!]\centering
\includegraphics[width=3in]{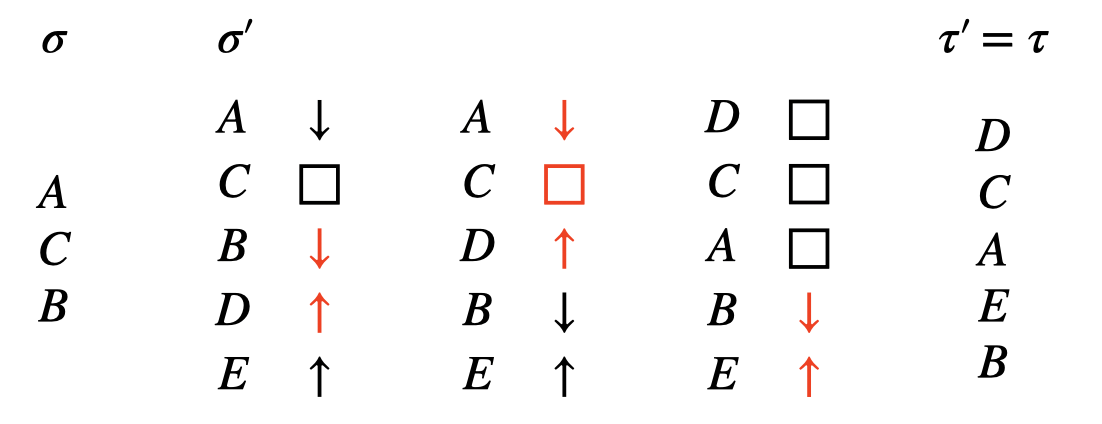}
\caption{For ballots
$\sigma=(A,C,B)$ and $\tau=(D,C,A,E,B)$, we first complete $\sigma$ to $\sigma'$ and then make swaps as indicated (in red) for a total Borda distance of $d_B(\sigma',\tau')=1+2+1=4$. This correctly matches half the $L^1$ difference of $\bb(\sigma')=(4,2,3,1,0)$ and $\bb(\tau')=(2,0,3,4,1)$.}
\label{fig:statevector}
\end{figure}

\begin{remark}
This proof fails for the averaged Borda convention.  With the pessimistic convention, moving a candidate from $\sigma$ out of a tie at the end does not change the position of the remaining unmentioned candidates, so it moves $\sigma$ closer to $\sigma'$.  But under averaging, the extension move can improve the position of one candidate but worsen the position of others.

In fact, a simple example of this kind suffices to show that our local moves are not enough to realize averaged Borda with graph distance.  Consider what is needed to transform a bullet vote for $A$ to a bullet vote for $B$ when there are many candidates.  In the averaged convention, all other candidates have the same Borda score in the initial and terminal ballot.  But extending $A$ to $AB$ changes the average score of the others, and we easily see that a direct edge from $A$ to $B$ would be needed.  Similar arguments force an explosion of degree.
\end{remark}

The following is a simple application of the strong and weak disagreement count.
\begin{proposition}\label{C:resolve}
$d_H(\sigma,\tau)$ equals the average (over all extensions of $\sigma$ and $\tau$ to complete ballots) of the number of adjacent swaps needed to convert $\sigma$ into $\tau$.
\end{proposition}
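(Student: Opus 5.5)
The plan is to use linearity of expectation over unordered pairs of candidates, together with the identity $d_H=\str+\frac12\wk$ established before the proof of Theorem~\ref{T:L1}. For two complete ballots $\hat\sigma,\hat\tau$, the number of adjacent swaps needed to convert one into the other is their Kendall tau distance. As in that proof, this equals the number of pairs $\{i,j\}$ on which they strongly disagree, since at every stage some adjacent pair witnesses a disagreement and swapping it removes exactly one. So the quantity to average is
$$\sum_{\{i,j\}} \mathbf 1\bigl[\hat\sigma,\hat\tau \text{ disagree on } \{i,j\}\bigr].$$
Here $\hat\sigma$ is a uniformly random completion of $\sigma$ (a uniformly random ordering of its unlisted candidates, placed after the listed ones), and $\hat\tau$ is an independent uniformly random completion of $\tau$. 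Its expectation is the sum over pairs of the probability of disagreement.

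Next I would classify each pair $\{i,j\}$ by how $\sigma$ and $\tau$ treat it.
\begin{itemize}
\item If both ballots express the same strict preference on the pair, every completion preserves it, so the probability of disagreement is $0$.
\item If they express opposite strict preferences (a strong disagreement), every pair of completions disagrees, so the probability is $1$.
\item If exactly one ballot, say $\sigma$, ties $i$ and $j$ while $\tau$ orders them (a weak disagreement), then the tie can only occur with both candidates unlisted in $\sigma$. A uniform completion orders them each way with probability $\frac12$, independently of $\hat\tau$, whose order on the pair is fixed. The probability is therefore $\frac12$.
\item If both ballots tie the pair, the two orders are independent fair coins, so the probability is $\frac12$.
\end{itemize}
Summing gives $\str+\frac12\wk+\frac12 T$, where $T$ is the number of pairs tied in both ballots.

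The main obstacle is this last case. The both-tied pairs contribute $\frac12$ each to the average, but they contribute $0$ to $d_H$, since both head-to-head coordinates are $0$. The statement as written therefore holds only after fixing the right averaging convention. I would handle this by making the extension coupled on the common tie: the candidates unlisted in both ballots are completed in the same (random) order in both ballots. Equivalently, the average is taken over pairs of extensions that agree on the mutually unmentioned candidates. Under this coupling, both-tied pairs never disagree, while the weak-disagreement analysis is unchanged. For a pair with one candidate unlisted in both ballots and the other listed in one ballot only, the completion of the other ballot is still uniform on that pair, so the probability remains $\frac12$.

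Under the coupled convention the expectation is exactly $\str+\frac12\wk=d_H(\sigma,\tau)$. I would state this convention explicitly, or equivalently restrict to the case where the two ballots leave no candidate jointly unmentioned (for instance, when either ballot is complete or of length $m-1$). I would also remark that the fully independent average exceeds $d_H$ by $\frac12\binom{l}{2}$, where $l$ is the number of candidates mentioned in neither ballot.
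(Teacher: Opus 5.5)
Your argument is the paper's own: linearity of expectation over pairs, with each weak disagreement becoming a strong disagreement with probability $\frac12$ under random completion, so the expectation is $\str+\frac12\wk=d_H$. Your caveat is also correct, and the paper's one-line proof silently skips this case: under independent completions, each pair of candidates mentioned in neither ballot adds $\frac12$ (for example, $\sigma=\tau=(A)$ in $\Omega_3$ gives average $\frac12$ while $d_H=0$). So your coupled convention, or the restriction to at most one jointly unmentioned candidate, is genuinely needed for the statement to hold.
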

\begin{proof}
If $\sigma$ or $\tau$ is an incomplete ballot, then randomly completing them resolves each weak disagreement into either an agreement or a strong disagreement, with equal likelihood.  Thus, the expected number of strong disagreements between their completions equals $\str+\frac 12 \wk$. 
\end{proof}

\begin{theorem}\label{T:stwkB}
$$\frac 12 \str+\frac 12 \wk < d_B 
\le \str + \frac 12 \wk = d_H$$
\end{theorem}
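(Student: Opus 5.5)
The plan is to prove the upper bound directly from Theorem~\ref{T:L1}, and the lower bound by following the geodesic built in that proof and accounting for the disagreements it removes.

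\textbf{Upper bound.} By Theorem~\ref{T:L1}, $d_H$ is the path metric of $\G_m$ and $d_B$ is the path metric of $\G_m^+$. The graph $\G_m^+$ contains every edge of $\G_m$ with the same weight: neighbor swaps are general swaps with $k=1$, and the truncation/extension edges are unchanged. So every path in $\G_m$ is also a path in $\G_m^+$ of the same length, and taking infima gives $d_B\le d_H=\str+\frac12\wk$.

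\textbf{Reducing the lower bound to a common candidate set.} I will reuse the waypoints $\sigma',\tau'$ from the proof of Theorem~\ref{T:L1}. That proof shows the concatenated path $\sigma\to\sigma'\to\tau'\to\tau$ is a $d_B$-geodesic. The two extension segments have total length $\frac12\wk(\sigma,\tau)$, since they are made of the same edges as in the $d_H$ computation. Hence
$$d_B(\sigma,\tau)=\tfrac12\wk(\sigma,\tau)+d_B(\sigma',\tau').$$
Next I check that $\str(\sigma',\tau')=\str(\sigma,\tau)$ and $\wk(\sigma',\tau')=0$.
\begin{itemize}
\item Consider a pair that is tied in exactly one ballot, say both candidates are unlisted in $\sigma$ but ordered in $\tau$. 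In $\sigma'$ the pair is ordered as in $\tau$, so it becomes an agreement.
\item Consider a pair unlisted in both ballots. It stays tied in both waypoints, so it contributes nothing.
\item Consider a pair already strictly ordered in $\sigma$, including the case of one listed and one unlisted candidate. Its order is untouched, because extension only appends candidates below all listed ones.
\end{itemize}
The same holds for $\tau$, so strong disagreements are exactly preserved. It therefore suffices to show $d_B(\sigma',\tau')>\frac12\str(\sigma',\tau')$.

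\textbf{Main step: the swap path.} Use the Borda geodesic from $\sigma'$ to $\tau'$ constructed in the proof of Theorem~\ref{T:L1}, a sequence of general swaps of costs $k_1,\dots,k_r$ with $d_B(\sigma',\tau')=\sum k_s$. A swap of positions $i$ and $i+k$ can change the relative order of at most $1+2(k-1)=2k-1$ pairs: the swapped pair itself, and each of the $k-1$ intermediate candidates with each of the two moved candidates. The path ends at $\tau'$, where $\str=0$, so
$$\str(\sigma',\tau')\le\sum_s(2k_s-1)=2d_B(\sigma',\tau')-r.$$
When $\str>0$ we have $r\ge1$, which gives the strict inequality $\frac12\str<d_B(\sigma',\tau')$. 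Adding $\frac12\wk$ finishes the argument.

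\textbf{Where the statement needs care.} The delicate point is the strictness. If $\str(\sigma,\tau)=0$, then $\sigma'=\tau'$ and $d_B=\frac12\wk$ exactly. For example, $\sigma=(A)$ and $\tau=(A,B)$ in $\Omega_3$ give $d_B=\frac12=\frac12\wk$. So the strict inequality should be read as holding whenever $\str>0$, with equality $d_B=\frac12\wk$ (in particular $d_B=0$ when $\sigma=\tau$) otherwise. I would state this explicitly.
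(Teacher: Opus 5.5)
Your proof is correct and follows the paper's route. You use the same waypoint decomposition $d_B(\sigma,\tau)=\frac12\wk(\sigma,\tau)+d_B(\sigma',\tau')$, and then observe that a swap of cost $k$ accounts for at most $2k-1$ strong disagreements; the paper phrases this as the $\G_m$-distance between the endpoints of a length-$k$ shortcut edge being $2k-1$. You also make explicit three things the paper uses only tacitly: the upper bound via $\G_m\subset\G_m^+$, and the identities $\str(\sigma',\tau')=\str(\sigma,\tau)$ and $\wk(\sigma',\tau')=0$.

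Your caveat on strictness is correct and points to a genuine flaw in the statement as written. When $\str(\sigma,\tau)=0$ (e.g.\ $\sigma=\tau$, or $\sigma=(A)$, $\tau=(A,B)$ in $\Omega_3$), one has $d_B=\frac12\wk$ exactly. So the strict lower bound holds only when $\str>0$, a case the paper's ``more than half'' argument silently assumes.
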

\begin{proof}
The proof of Theorem~\ref{T:L1} established that
$$ d_B(\sigma,\tau) = 
\underbrace{d_B(\sigma,\sigma') + d_B(\tau,\tau')}_{=\frac 12 \wk(\sigma,\tau)}
+ d_B(\sigma',\tau'),$$
so it remains to show that $d_B(\sigma',\tau')$ (which is the $\G_n^+$-path distance between $\sigma'$ and $\tau'$) is more than half $\str(\sigma',\tau')$ (which is the $\G_n$-path distance).  For this, it will suffice to prove that the weight of each shortcut edge of $\G_n^+$ is more than half the $\G_n$-distance between its endpoints.  Every shortcut edge of length $k$ represents two candidates jumping over $k-1$ stationary candidates located in between them;  with neighbor swaps, this can be realized by moving the lower candidate $k$ spots and the higher candidate $k-1$ spots, for $2k-1$ overall.  Thus each shortcut speeds up a path segment by a factor less than two.
\end{proof}

\begin{corollary}\label{C:hihi}
$d_B\leq d_H < 2d_B$.
\end{corollary}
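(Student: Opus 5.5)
The plan is to read both inequalities directly off Theorem~\ref{T:stwkB}, using the identity $d_H=\str+\frac 12\wk$ established before the proof of Theorem~\ref{T:L1}. No new geometric construction should be needed; the corollary is a rearrangement of that two-sided bound into a comparison between $d_B$ and $d_H$ alone.

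The left inequality $d_B\le d_H$ is literally the right-hand half of Theorem~\ref{T:stwkB}, namely $d_B\le \str+\frac 12\wk=d_H$, so I would simply cite it.

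For the right inequality, I would double the left-hand half of Theorem~\ref{T:stwkB}:
$$2d_B>\str+\wk.$$
Since weak disagreements are counted with nonnegative multiplicity, $\wk\ge\frac 12\wk$, and hence
$$\str+\wk\ \ge\ \str+\tfrac 12\wk=d_H.$$
Chaining the two gives $2d_B>d_H$.

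The only point needing care is strictness in the degenerate case $\sigma=\tau$. There $d_B=d_H=0$, so $d_H<2d_B$ cannot hold, and the strict inequality in Theorem~\ref{T:stwkB} must likewise be understood for distinct ballots. I would state explicitly that the corollary is asserted for $\sigma\neq\tau$. For distinct ballots, at least one of $\str$ or $\wk$ is positive, so the strict bound of Theorem~\ref{T:stwkB} applies and the argument above goes through. That is the only real obstacle, and it is a matter of stating the hypothesis correctly rather than of proof.
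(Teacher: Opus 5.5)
Your route is the paper's: the corollary is read off from Theorem~\ref{T:stwkB} by doubling its left half and using $\frac 12\wk\le\wk$. You are also right that $\sigma=\tau$ must be excluded, which matches the ``distinct ballots'' hypothesis in Corollary~\ref{P:my_diaconis}.

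The gap is your strictness step: ``at least one of $\str$ or $\wk$ is positive, so the strict bound of Theorem~\ref{T:stwkB} applies.'' That is not enough, because the strict lower bound in Theorem~\ref{T:stwkB} only holds when $\str>0$. Its proof gets strictness from $d_B(\sigma',\tau')>\frac12\str(\sigma',\tau')$, and that in turn comes from the savings of each shortcut edge. So the $\sigma'\to\tau'$ segment must contain at least one edge, i.e.\ $\sigma'\neq\tau'$.

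When $\str=0<\wk$ you get equality. For example, take $\sigma=(A)$ and $\tau=(A,B)$ in $\Omega_4$. Then $\str=0$ and $\wk=2$, while $\bb(\sigma)=(3,0,0,0)$ and $\bb(\tau)=(3,2,0,0)$. This gives $d_B=1=\frac12\str+\frac12\wk$, so the chain $2d_B>\str+\wk$ fails in this case.

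The corollary still holds once strictness is placed correctly:
\begin{itemize}
\item $2d_B\ge\str+\wk$ always, with strict inequality when $\str>0$;
\item $\str+\wk\ge\str+\frac12\wk=d_H$, with strict inequality when $\wk>0$.
\end{itemize}
For distinct ballots $\str+\wk>0$, so at least one link of the chain is strict, and $d_H<2d_B$ follows. In the case $\str=0$, the strictness comes entirely from the second link; there one in fact has $d_B=d_H=\frac12\wk$.
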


\citet{Fagin_Kumar_Mahdian_Sivakumar_Vee_2006} define a family of metrics they denote by $K^{(p)}$ as $\str+p\cdot \wk$, so that 
$d_H=K^{(1/2)}$.  They prove that these are true metrics on the space of ballots with arbitrary ties when $p\in [1/2,1]$, and that they are "near-metrics" (satisfying a relaxed triangle inequality) for $p\in (0,1/2)$.  

\begin{remark}
The Borda distance $d_B$ is not a multiple of any $K^{(p)}$.  To see this, note that complete ballots have no weak disagreements.  
So for $d_B$ to be a rescaling of some $K^{(p)}$ on complete ballots, it would have to be proportional to the number of strong disagreements.
Consider $\sigma=ABCDE$, $\tau=EBCDA$, and $\pi=DBCAE$.  The candidate swap with rank difference four means ballot $\tau$ requires 7 inversions to convert to $\sigma$ while their shortcut edge has length $4$; the swap three units apart means $\pi$ vs. $\sigma$ needs 5 inversions while their shortcut edge has length $3$.  
This shows that $d_B$ is not a fixed multiple of $\str$, so 
$d_B\neq a\!\cdot\! \str+b\!\cdot\! \wk$ for any $a,b$.
\end{remark}

The next result is shown by Fagin et al. for averaged Borda, and also holds for our (pessimistic) $d_B$.

\begin{corollary}\label{P:my_diaconis} For any pair $\sigma,\tau\in\Omega_n$ of distinct ballots,
$\displaystyle\frac{d_H(\sigma,\tau)}{d_B(\sigma,\tau)}\in[1,2)$,
and these bounds are sharp.
\end{corollary}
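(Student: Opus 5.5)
The containment $d_H/d_B\in[1,2)$ for distinct ballots is Corollary~\ref{C:hihi} rearranged. Since $\sigma\neq\tau$, the ballots have at least one strong or weak disagreement, so $d_H>0$. Theorem~\ref{T:stwkB} gives $d_B>\frac12(\str+\wk)\ge 0$, so $d_B>0$ and we may divide: $d_B\le d_H<2d_B$ becomes $1\le d_H/d_B<2$. What remains is sharpness: the value $1$ must be attained, and values arbitrarily close to $2$ must occur.

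For the lower bound I would use a single neighbor swap, say $\sigma=(A,B,\dots)$ and $\tau=(B,A,\dots)$ as complete ballots. Both $d_H$ and $d_B$ equal $1$, since only the two swapped Borda coordinates change, each by $1$. Alternatively, a single extension edge $(A)\to(A,B)$ has $d_B=d_H=\frac{m-2}{2}$. Either way the ratio $1$ is attained.

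For the upper bound I would consider, in $\Omega_n$, the complete ballots $\sigma=(A_1,A_2,\dots,A_n)$ and $\tau=(A_n,A_2,\dots,A_{n-1},A_1)$, which differ by a single shortcut swap of the first and last candidates. Only two Borda coordinates change, each by $n-1$, so $d_B=n-1$. On the head-to-head side, the strong disagreements are the pair $\{A_1,A_n\}$ together with each of the $n-2$ middle candidates paired with $A_1$ and with $A_n$. That gives $\str=2n-3$ and $\wk=0$, so $d_H=2n-3$ and $d_H/d_B=\frac{2n-3}{n-1}=2-\frac1{n-1}$, which tends to $2$ as $n\to\infty$. This is exactly the $2k-1$ versus $k$ count from the proof of Theorem~\ref{T:stwkB}.

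The one point needing care is what "sharp" means for the upper bound. For fixed $n$ the set $\Omega_n$ is finite, so the strict bound $2$ is only approached as $n$ grows, and I would state sharpness in that sense, as the supremum over all $n$. If sharpness within each fixed $\Omega_n$ were intended, I would need the maximum of the ratio over a fixed $n$, which I expect is $2-\frac1{n-1}$ from the transposition example. I would not try to prove that and would keep the asymptotic reading.
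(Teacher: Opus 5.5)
Your proof is correct and shares the paper's skeleton. The containment is Corollary~\ref{C:hihi} rearranged, and the value $1$ is attained by a neighbor swap.

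Where you differ is the family realizing the upper bound. The paper uses a complete ballot $\sigma$ and its reversal $\hat\sigma$, for which $d_H=\binom n2$. Its stated value $d_B(\sigma,\hat\sigma)=(n-1)(n+1)$ is a slip: taken literally, it would send the ratio to $\frac12$. The correct value is $\frac12\sum_{i=1}^n|n+1-2i|=\lfloor n^2/4\rfloor$, giving ratios $2-\frac 2n$ for even $n$ and $2-\frac{2}{n+1}$ for odd $n$.

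Your first--last transposition is a single shortcut edge, which buys three things:
\begin{itemize}
\item The computation is immediate: $2n-3$ strong disagreements against a Borda cost of $n-1$.
\item It is visibly the extreme case of the $2k-1$ versus $k$ count behind Theorem~\ref{T:stwkB}.
\item Its ratio $2-\frac1{n-1}$ is at least as large as the reversal's for every $n\ge 3$. Equality holds only at $n=3$, where the two examples coincide; in $\Omega_4$ you get $\frac53$ versus $\frac32$.
\end{itemize}
The reversal's appeal is that it is the canonical extreme pair, realizing the maximum possible $d_H$. But that is not where the ratio is pushed furthest.

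One small repair is needed. Your positivity step invokes the strict inequality $d_B>\frac12(\str+\wk)$ from Theorem~\ref{T:stwkB}, which is not valid when $\str=0$. For example, for $(A)$ versus $(A,B)$ one has $d_B=\frac12\wk$ exactly. Either of the following gives $d_B>0$ instead: the non-strict version together with $\str+\wk>0$ for distinct ballots, or simply the injectivity of $\bb$ on $\Omega_n$.
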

\begin{proof}
The bound is just a rephrasing of Corollary~\ref{C:hihi}, so it remains to establish optimality.
If $\sigma\in\Omega_m$ is a complete ballot, let 
$\hat\sigma$ be its reversal, with candidates listed in opposite order.  We have $d_H(\sigma,\hat\sigma)=\binom{m}{2}$ because every pair will have a strong disagreement.  
On the other hand, $d_B(\sigma,\hat\sigma)= (m-1)(m+1)$, so comparing these gives a sequence with 
$\displaystyle\frac{d_H(\sigma,\hat\sigma)}{d_B(\sigma,\hat\sigma)}
\to 2$.  For the other bound, just note that any pair of ballots connected by a single neighbor swaps realizes $d_H=d_B$.
\end{proof}

In the proof of Theorem~\ref{T:stwkB}, we noted that a shortcut edge of length $k$ replaces a path of length $2k-1$ in the basic ballot graph.  This means its distance savings is $k-1$.  Thus if the shortcut edges used in a geodesic have lengths $k_1,k_2,\dots,k_r$, we have the exact accounting $d_H-d_B = (\sum k_i)-r$.  

For complete ballots in $S_m$, Diaconis and Graham show that the expected Kendall tau distance to the identity is $I=m^2/4$ and the expected Spearman footrule distance is $D=m^2/3$  \cite[Table 1]{Diaconis_Graham_1977}.  Since $d_H=I$ and $d_B=\frac 12 D$, this gives that the average value of $d_H$ is 3/2 the average value of $d_B$.

As a final remark on natural ways to metrize the space of ballots, note that we can regard a ballot with ties as a collection of all complete ballots obtained by arbitrary resolution of ties.  
Rather than computing the average as in Proposition~\ref{C:resolve}, one could measure the maximum or the minimum distance between points in the resolution of one ballot to those representing the other.  In \cite{Critchlow_1985}, Critchlow also studied the  Hausdorff distance between these point clouds $\bar\sigma$ and $\bar\tau$, namely the smallest $\epsilon$ so that the $\epsilon$-neighborhood of $\bar\sigma$ contains $\bar\tau$, and likewise the $\epsilon$-neighborhood of $\bar\tau$ contains $\bar\sigma$, with respect to swap distance.   \citet{Fagin_Kumar_Mahdian_Sivakumar_Vee_2006} proved the
following characterization.  Let $\sigma,\tau$ be partial ballots and define $\wk(\sigma\to\tau)$ to be the number of candidates mentioned in $\tau$ but not $\sigma$ while $\wk(\tau\to\sigma)$ is the reverse, so that $\wk(\sigma,\tau)$ is the sum of these two asymmetric values, then 
$$\dHaus(\sigma,\tau) = \str(\sigma,\tau) + 
\max\left(
\wk(\sigma\to\tau),\wk(\tau\to\sigma)
\right).$$
Because the max of two numbers is no greater than their sum but no less than half of their sum, we get $d_H\le \dHaus \le 2d_B$.

\subsection{Relating the generalized ballot graph to coordinate embeddings}
Between a pair of generalized ballots, we define weak and strong disagreements exactly as before; in particular, a pair of ballots has a weak disagreement between two candidates if the candidates tie (are in the same ranking position) with respect to exactly one of the two ballots.  Just like before, we define $$d_H(\alpha,\beta) = \frac 12 \|\hh(\alpha)-\hh(\beta)\|_1$$ for all $\alpha,\beta\in\WO_m$, and we observe that
$$d_H(\alpha,\beta) = \str(\alpha,\beta) + \frac 12 \wk(\alpha,\beta).$$  
A portion of Theorem~\ref{T:L1} generalizes as follows.

\begin{proposition} The path metric on  $\WG_m$ realizes $d_H$ on generalized ballots.
\end{proposition}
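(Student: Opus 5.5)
The plan follows the two-inequality structure of the proof of Theorem~\ref{T:L1}, using the identity $d_H=\str+\frac12\wk$ on generalized ballots.

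\textbf{Lower bound.} First I check that every edge of $\WG_m$ has weight equal to the $d_H$-distance between its endpoints. Suppose $\beta$ is obtained from $\alpha$ by merging adjacent candidate sets $S$ and $T$ of sizes $s$ and $t$. Then the pairs $\{x,y\}$ with $x\in S$, $y\in T$ go from strict preference to tie. Each such pair is one weak disagreement, and every other pair is unchanged. So $d_H(\alpha,\beta)=\frac12 st$, which is exactly the edge length. Since $d_H$ is half an $L^1$ distance, it satisfies the triangle inequality. Summing along any path therefore shows that $d_H$ is at most the path metric.

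\textbf{Upper bound.} Given $\alpha,\beta\in\WO_m$, I construct a path of length $\str(\alpha,\beta)+\frac12\wk(\alpha,\beta)$. The waypoint is the common refinement $\gamma$: order candidates lexicographically, first by their position in $\alpha$ and then by their position in $\beta$. Equivalently, split each $\alpha$-block according to the $\beta$-order. Then $\gamma$ refines $\alpha$, and it agrees with $\beta$ on every pair that $\alpha$ ties. Symmetrically, let $\delta$ refine $\beta$ using $\alpha$ to break ties.

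Going from $\alpha$ to $\gamma$ only splits ties. Splitting a block into two adjacent pieces is the reverse of a merge edge, and its cost is $\frac12$ times the number of newly separated pairs. Repeated binary splits therefore cost $\frac12$ times the number of pairs tied in $\alpha$ but not in $\gamma$. Each such pair is tied in $\alpha$ and untied in $\beta$, so it is a weak disagreement. The same holds from $\delta$ back to $\beta$.

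Now compare $\gamma$ and $\delta$. They have the same tie structure, namely pairs tied in both $\alpha$ and $\beta$, and they disagree strictly exactly on the strong disagreements of $\alpha,\beta$. I claim such a pair can be joined by a path of length equal to the number of their strong disagreements.

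\textbf{Main step.} As in Theorem~\ref{T:L1}, if $\gamma\neq\delta$ have the same tie structure, some pair of adjacent blocks $S,T$ of $\gamma$ appears in the opposite order in $\delta$. The argument is the same: otherwise every adjacent-block order agrees, which forces equality. I swap $S$ and $T$ through the path
\[
(\dots,S,T,\dots)\to(\dots,S\cup T,\dots)\to(\dots,T,S,\dots),
\]
of length $st$. The swap removes exactly $st$ strong disagreements, which generalizes Example~\ref{E:yaya}. Iterating gives a path of length $\str(\gamma,\delta)$.

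Summing the three pieces gives a path of length $\frac12\wk(\alpha,\beta)+\str(\alpha,\beta)=d_H(\alpha,\beta)$. This requires checking that the pairs accounted for in each segment are disjoint and exhaust the disagreements. There are three kinds of pairs:
\begin{itemize}
\item pairs tied in exactly one of $\alpha,\beta$, which are paid $\frac12$ in the appropriate splitting segment;
\item pairs strict in both and opposite, which are paid $1$ in the middle segment;
\item all other pairs, which are never changed.
\end{itemize}

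The main obstacle is justifying the adjacent-block witness in the middle segment. The issue is whether whole blocks, rather than only parts of blocks, can be reversed. This holds because $\gamma$ and $\delta$ have identical blocks. That is precisely why both waypoints are refined relative to each other.
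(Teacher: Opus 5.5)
Your proof is correct and follows the same route as the paper: both proofs break each ballot's ties using the other ballot's strict preferences, so that the two waypoints land on the common refinement, and the splitting segments then cost $\frac12\wk$ while the middle segment costs $\str$. Your version is more complete than the paper's, since you state the lower bound explicitly (edge weights equal $d_H$, plus the triangle inequality) and you justify the middle segment via the merge-then-split swap of adjacent blocks, which the paper only asserts.
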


\begin{proof}
To start, let's define the common refinement of two generalized ballots.  It is the coarsest grouping of candidates so that two candidates appear together in the common refinement iff they are grouped together in both ballots.  For instance, the common refinement of $(A,\{B,C\},D,\{E,F\})$ and 
$(\{B,D\},\{A,C\},\{D,E,F\})$ consists of $A,B,C,D$ singletons and the pair $\{E,F\}$.  

Next, note that the edges in $\WG$ are defined as merges of candidate sets ranked in neighboring positions.  Recall that if one set has $r$ candidates and the neighboring set has $s$ candidates, then the edge between them was given length $rs/2$.  Also note that the ranking that had them separate had $rs$ strong preferences among those candidates, while the ranking that has them all grouped has only weak preferences.

For a pair $\alpha,\beta \in\WO_m$, we must construct a path from $\alpha$ to $\beta$ in $\WG_m$ whose edge weights sum to the weighted disagreement count.  
We design such a path with waypoints $$\alpha\rightarrow\alpha'\rightarrow\beta'\rightarrow \beta,$$ where $\alpha'$ and $\beta'$ resolve all ties from their respective ballots that are not present in the common refinement.  To resolve such a tie in $\alpha$, we use the strict preference seen in $\beta$, and vice versa.  Thus $\alpha'$ and $\beta'$ both have the common refinement as their candidate sets.  
Then we can see that the path distance  
$\alpha\to\alpha'$ plus the distance from $\beta\to\beta'$ uses moves that resolve a single weak disagreement between $\alpha$ and $\beta$ and each such move requires a path of length $1/2$, so those lengths sum to $\frac 12 \wk(\alpha,\beta)$.

On the other hand, every step in the path $\alpha'\to\beta'$ only resolves strong disagreements, so its lengths sum to $\str(\alpha,\beta)$.
\end{proof}

The shortcut constructions for $d_B$ and $d_{B_\text{avg}}$ (pessimistic and averaged conventions, respectively) generalize cleanly to $\WO_m$, with the Borda conventions applied within each candidate set in the corresponding way. 

\citet{Fagin_Kumar_Mahdian_Sivakumar_Vee_2006} prove that the previous distance bounds extend to generalized ballots  under the averaged convention.
\begin{proposition}\label{P:complete2} If $\alpha, \beta\in\WO_m$ are distinct, then 
$
\displaystyle\frac{d_{B_\text{avg}}(\alpha,\beta)}{d_{H}(\alpha,\beta)}\in[1,2]$.
\end{proposition}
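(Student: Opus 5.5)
Throughout, I read $d_{B_\text{avg}}$ here in the normalization of \citet{Fagin_Kumar_Mahdian_Sivakumar_Vee_2006}: the full $L^1$ footrule of averaged positions, $F(\alpha,\beta)=\sum_i |\bar\alpha(i)-\bar\beta(i)|$, with no factor $\frac12$. This is the normalization in which the Diaconis--Graham inequality reads $I\le D\le 2I$, and in which the stated interval $[1,2]$ holds. (In the halved normalization used for $d_B$ elsewhere in this section the ratio would instead lie in $[\frac12,1]$.) I therefore aim to prove
$$d_H(\alpha,\beta)\le F(\alpha,\beta)\le 2\,d_H(\alpha,\beta).$$

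\textbf{Step 1: a probabilistic description of both sides.} I will resolve each tie block of $\alpha$ and of $\beta$ by an independent uniformly random ordering, giving complete ballots $\hat\alpha,\hat\beta$. The argument of Proposition~\ref{C:resolve} carries over verbatim to generalized ballots. Each weak disagreement becomes a strong one with probability $\frac12$, and strong disagreements and agreements are unaffected. Hence
$$\mathbb E\,I(\hat\alpha,\hat\beta)=\str+\tfrac12\wk=d_H(\alpha,\beta).$$
Likewise the averaged position satisfies $\bar\alpha(i)=\mathbb E\,\hat\alpha(i)$ by the definition of the averaged convention.

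\textbf{Step 2: upper bound.} By Jensen (the triangle inequality for $|\cdot|$) and the Diaconis--Graham bound $D\le 2I$ for complete ballots,
$$F(\alpha,\beta)=\sum_i\bigl|\mathbb E(\hat\alpha(i)-\hat\beta(i))\bigr|\le \mathbb E\,D(\hat\alpha,\hat\beta)\le 2\,\mathbb E\,I(\hat\alpha,\hat\beta)=2\,d_H(\alpha,\beta).$$

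\textbf{Step 3: lower bound (the main obstacle).} Here Jensen points the wrong way, so I will argue combinatorially. I will write each averaged position pairwise:
$$\bar\alpha(i)=1+\sum_{j\ne i}\Bigl([\,j\succ_\alpha i\,]+\tfrac12[\,j\sim_\alpha i\,]\Bigr).$$
Then $\bar\alpha(i)-\bar\beta(i)=\sum_{j\ne i}c_{ij}$, where $|c_{ij}|$ is $1$ for a strong disagreement, $\frac12$ for a weak one, and $0$ otherwise. Thus $d_H=\frac12\sum_{i\neq j}|c_{ij}|$ and $F=\sum_i|\sum_j c_{ij}|$, and I need to control the cancellation inside each inner sum.

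My plan is induction on the number of non-singleton blocks, following the Diaconis--Graham strategy. For complete ballots the inequality is exactly $I\le D$. For the inductive step, I pick a tie block $T$ of $\alpha$ that is not a block of the common refinement, and split it according to $\beta$'s order. The same split-toward-the-other-ballot move is used in the proofs of Theorem~\ref{T:L1} and of the generalized-ballot proposition. This move lowers $d_H$ by exactly half the number of weak disagreements it resolves.

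I then need to show that $F$ drops by at most that amount. The split moves the averaged positions of members of $T$ by amounts summing in absolute value to half the number of newly separated pairs, and the positions of candidates outside $T$ do not move. By the triangle inequality, $F$ therefore decreases by at most this total. Applying the induction hypothesis to the split ballot then gives $d_H\le F$.

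The delicate point is checking that the inductive invariant really reduces to $I\le D$ once no ties remain. I also need to verify that ties present in both ballots (blocks of the common refinement) contribute $0$ to both $d_H$ and $F$. The first of these I would confirm on small cases first. Sharpness of both endpoints then follows as in Corollary~\ref{P:my_diaconis}.
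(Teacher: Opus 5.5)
The paper gives no proof of this proposition; it only cites Fagin et al. Your reading of the normalization is correct. With the paper's halved $d_B$, Corollary~\ref{C:hihi} already puts the printed ratio in $(\frac12,1]$ on complete ballots, so what is actually claimed is $d_H\le F\le 2d_H$.

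\textbf{The lower bound (Step 3) has a genuine gap: the inductive step runs backwards.} Let $P$ be the number of pairs separated by the split. Then $d_H(\alpha,\beta)=d_H(\alpha',\beta)+\frac P2$. To pass from $d_H(\alpha',\beta)\le F(\alpha',\beta)$ to $d_H(\alpha,\beta)\le F(\alpha,\beta)$, you need $F(\alpha,\beta)\ge F(\alpha',\beta)+\frac P2$, i.e.\ $F$ must drop by \emph{at least} $\frac P2$. The triangle inequality only bounds the drop from above.

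The bound you state is also miscomputed. A member of $T$ moves by $\frac12|u-v|$, where $u,v$ count the members of $T$ placed above and below it. So the total movement can equal $P$, and it does whenever a two-element block is split.

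Worse, the inequality you need is false. Take $\alpha=(\{a,b\},c)$ and $\beta=(c,a,b)$. Splitting toward $\beta$ gives $\alpha'=(a,b,c)$, and $F(\alpha,\beta)=F(\alpha',\beta)=4$, because $a$ moves from $1.5$ to $1$, away from its target $2$. Meanwhile $d_H$ drops from $\frac52$ to $2$.

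With the correct bound $P$, your splitting argument is really an induction proof of the \emph{upper} bound $F\le 2d_H$. The lower bound is the nontrivial half of Fagin et al.'s theorem. It needs an argument that uses transitivity of both orders to control the cancellation in $\sum_j c_{ij}$; for an arbitrary antisymmetric array such as $c_{12}=c_{23}=c_{31}=1$, every row sum vanishes. Your proposal does not supply such an argument.

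\textbf{Step 1 is also wrong as written.} Under independent resolutions, a pair tied in \emph{both} ballots becomes a strong disagreement with probability $\frac12$. So $\mathbb E\,I(\hat\alpha,\hat\beta)$ exceeds $d_H$ by half the number of such pairs. For $\alpha=(\{a,b\},c)$ and $\beta=(c,\{a,b\})$ one gets $\mathbb E\,I=\frac52$, but $d_H=2$ and $F=4$, so your chain only yields $F\le 5$. The same caveat applies to Proposition~\ref{C:resolve} when two candidates are unmentioned on both ballots.

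There are two fixes:
\begin{itemize}
\item Order each block of the common refinement identically in $\hat\alpha$ and $\hat\beta$, interleaving them uniformly and independently within each ballot's blocks. This keeps both marginals uniform.
\item Skip probability altogether. Your own pairwise formula gives $F=\sum_i|\sum_j c_{ij}|\le\sum_{i\ne j}|c_{ij}|=2d_H$ directly.
\end{itemize}

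Finally, splitting only $\alpha$'s blocks leaves $\beta$'s ties intact. So the base case is two ballots with the same blocks, not complete ballots. Ordering each common block identically reduces it to $I\le D$.
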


However, the corresponding statement is false for $d_B$.
\begin{example}
If $\alpha=(\{A_1,...,A_k,{\color{red}X}\},\{B,{\color{red}Y}\})$ and $\beta=(\{A_1,...,A_k,{\color{red}Y})\},\{B,{\color{red}X}\})$ in $\WO_{k+3}$, then we have $d_B(\alpha,\beta) = 4$ and $d_{H}(\alpha,\beta) = 2+k$, so $\lim\limits_{k\to\infty}\displaystyle\frac{d_{B}(\alpha,\beta)}{d_{H}(\alpha,\beta)} = 0$.  
\end{example}
\FloatBarrier
\section{Finding blocs and slates; synthetic validation}
\label{sec:coordinate-clusters}

\subsection{Grouping the ballots}\label{subsec:opti}

Once the ballots are given coordinate representations by the Borda ($\bb$) or head-to-head ($\hh$) embedding, we can employ standard clustering techniques to partition the ballots.  Recall that the Kemeny problem is to find the optimal center of a profile by $d_H$:  that is, the ranking minimizing the summed Kendall tau distance to voters.  
An algorithm that outputs the minimizer is known as the Kemeny voting rule; this is well known to be NP-hard.  Similarly, the 2-Kemeny problem seeks two centers that optimize the total cost, given by summing the distance of voters to the nearer center. Previous authors have recognized the descriptive relevance of these centers.  Indeed, \citet{Faliszewski_Kaczmarczyk_Sornat_Szufa_Wąs_2023} {\em define} a polarized election as one better described by two centers than one. 

Here, we can make nontrivial use of the metric results to observe that sufficiently polarized elections have canonical clusterings.  Recall that  $d$ and $d'$ are called 2-biLipschitz equivalent metrics on the same set $X$ if $\frac 12 d'(x,y)\le d(x,y) \le 2 d'(x,y)$ for all $x,y$.

\begin{theorem}[Classification stability]
An $(R,r)$--polarized election with respect to a distance $d$ on ballots is one for which there are centers $x,y$ with $d(x,y)>R$ and all cast ballots lie within distance $r$ of either $x$ or $y$.
Then if $R>4r$, any $(R,r)$--polarized election has a unique optimal clustering.  Furthermore, if $R>10r$, then the same clustering is also optimal with respect to any 2-biLipschitz-equivalent $d'$.  
\end{theorem}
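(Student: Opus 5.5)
The plan is to reduce both claims to a purely combinatorial \emph{separation condition} on the cast ballots, and then prove that this condition forces the clustering. Let $X$ be the set of cast ballots within $r$ of $x$, and $Y$ the set within $r$ of $y$. These are disjoint, since $R>2r$.

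By the triangle inequality:
\begin{itemize}
\item any two ballots in the same group satisfy $d(p,p')\le 2r$;
\item any $p\in X$ and $q\in Y$ satisfy $d(p,q)\ge d(x,y)-2r>R-2r$.
\end{itemize}
So the hypothesis $R>4r$ is exactly what gives
$$\max_{\text{same group}} d \;<\; \min_{\text{cross}} d,$$
i.e.\ every intra-group distance is strictly smaller than every cross-group distance. Step one of the proof is this observation. The main claim is then that whenever a profile splits into two groups with this separation property, the partition $\{X,Y\}$ is the unique optimal 2-clustering.

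For the main claim I would argue by exchange. Take an optimal clustering with centers $c_1,c_2$, each ballot assigned to its nearer center, and suppose some cluster mixes the groups. Say the cluster of $c_1$ contains $p\in X$ and $q\in Y$. Then $d(p,c_1)+d(q,c_1)\ge d(p,q)$, which exceeds every intra-group distance.

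There are two sub-cases, according to whether the other center $c_2$ is ``useful'' to one of the groups.
\begin{itemize}
\item If the cluster of $c_2$ meets only one group, say $Y$, I would move $c_2$ onto a ballot of $Y$ (or keep it) and move $c_1$ onto a ballot of $X$. Every ballot's cost then becomes an intra-group distance, which by separation is strictly less than the cross-type contributions it replaces. This strictly lowers total cost, a contradiction.
\item If both clusters are mixed, the same relocation (one center to each group) gives a strict improvement by the same comparison.
\end{itemize}
The delicate part is making the ``strictly less'' comparison hold \emph{in aggregate}, when the costs are sums and cluster sizes may be very unbalanced. For this I would pair each ballot's new cost, at most $2r$, against its old cost. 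For a ballot that was assigned across groups, the old cost is at least half of a cross distance, i.e.\ greater than $(R-2r)/2>r$. I expect this to be the main obstacle: one may need the sharper comparison $r$ versus $(R-2r)/2$ rather than $2r$ versus $R-2r$, and the exact threshold $4r$ should emerge from balancing these. Uniqueness then follows because any clustering other than $\{X,Y\}$ contains a mixed cluster, and so is strictly beaten.

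For the second claim, transport the separation condition through the biLipschitz equivalence rather than re-proving optimality. If $\frac12 d\le d'\le 2d$, then under $d'$:
\begin{itemize}
\item intra-group distances are at most $2\cdot 2r=4r$;
\item cross-group distances are at least $\frac12(R-2r)$.
\end{itemize}
When $R>10r$ we get $\frac12(R-2r)>4r$, so the strict separation inequality persists for $d'$. The same combinatorial claim from the first part then yields that $\{X,Y\}$ is the unique optimal clustering for $d'$ as well.

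The point of routing everything through the separation inequality, rather than through $(R,r)$-polarization for $d'$, is that the naive route needs more slack. Under $d'$ the election is only $(R/2,2r)$-polarized, and applying the first part directly would require $R>16r$. The separation formulation is what makes the stated constant $10r$ come out.
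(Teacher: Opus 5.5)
Your separation computation and its transfer to $d'$ are exactly the paper's proof. A ballot's cost to a center in its own ball is at most $2r$ (resp.\ $4r$ under $d'$), and to a center in the other ball at least $R-2r$ (resp.\ $\frac12R-r$), whence ``the classification is determined.'' That argument only shows that \emph{once the two centers lie one in each ball}, nearest-center assignment yields $\{X,Y\}$. You go further and reduce everything to the claim that separation alone makes $\{X,Y\}$ the unique optimal 2-clustering for the summed-distance objective. That claim is false, because nothing controls the group sizes. Take $d=d_H$ with $m=6$, a complete ballot $\sigma$, a neighbor swap $\sigma_1$ of it, and its reversal $\hat\sigma$. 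Cast $16$ copies each of $\sigma$ and $\sigma_1$ together with one $\hat\sigma$. With $x=\sigma$, $y=\hat\sigma$ this is $(14,1)$-polarized, so even $R>10r$ holds; intra-group distances are at most $1$ and cross-group distances at least $14$. Yet $\{X,Y\}$ costs at least $16$ under any choice of centers, since each $\sigma$/$\sigma_1$ pair contributes at least $d_H(\sigma,\sigma_1)=1$ to a common center. Centers $\sigma,\sigma_1$ instead give a mixed clustering of cost $d_H(\hat\sigma,\sigma_1)=14$. Letting $m$ grow and casting more than $\binom m2$ copies of each of $\sigma,\sigma_1$ makes $R/r$ arbitrarily large, so no threshold on $R/r$ can rescue the claim.

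This is exactly where your exchange step breaks. When both centers serve the large group, moving one of them to the small group helps the stranded ballots. But it raises the cost of every large-group ballot that center was serving. Those ballots were paying intra-group distances (possibly $0$), not cross-group ones, so ``every ballot's cost becomes an intra-group distance strictly less than the cross-type contribution it replaces'' fails for them. The saving is carried by the $|Y|$ stranded ballots while the loss scales with the number of $X$-ballots, so the sign depends on $|X|/|Y|$. Sharpening per-ballot constants ($r$ versus $(R-2r)/2$) cannot fix a counting imbalance.

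What separation does give is the statement the paper actually argues: any center pair with one center in each of $B_r(x)$, $B_r(y)$ (e.g.\ two cast ballots, one from each group) induces exactly $\{X,Y\}$, under $d$ when $R>4r$ and under $d'$ when $R>10r$. For genuine optimality you need an added balance hypothesis. For instance, if one center lies within $R/2-r$ of $x$ and the other within $R/2-r$ of $y$, the assignment is $\{X,Y\}$. Otherwise every ballot of one group costs more than $R/2-2r$. Meanwhile $\{X,Y\}$ with cast-ballot centers costs less than $2r(|X|+|Y|)$. Hence $\min(|X|,|Y|)\,(R/2-2r)\ge 2r(|X|+|Y|)$ suffices for $d$.
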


\begin{proof}
Suppose an election is $(R,r)$--polarized with respect to $d$ and let $x,y$ be centers that witness the polarization.  Then the cost of assigning points in $B_r(x)$ to any center in the ball is at most $2r$, and the cost of assigning them to any center in $B_r(y)$ is at least $R-2r$.  The classification is determined as long as $R-2r>2r$.  With respect to $d'$, the within-ball distance is at most $4r$ and the between-ball distance is at least $\frac 12 R-r$, so we need 
$\frac 12 R-r>4r$.   
\end{proof}

The bounds $d_B\le d_H<2d_B$ (Corollary~\ref{C:hihi}) let us apply this to our principal metrics.
Clearly, if we weaken the hypotheses by allowing only a share of points to lie in the neighborhoods of fixed centers, we obtain a still-meaningful statement requiring the clusterings to substantially agree.

The coordinate embeddings enable us to use standard heuristic clustering  algorithms to find approximate centers.  Furthermore, at the scale of Scottish elections, we can certify globally optimal 1-Kemeny centers in all 1070 elections and globally optimal 2-Kemeny solutions in virtually all elections with up to 10 candidates.  

The most popular class of clustering methods is $k$-means, which seeks to minimize the sums of squares of $L^2$ distances to $k$ centers in $\R^n$.  
For instance, Lloyd's algorithm is an iterative method that is computationally lightweight.  But the standard cost-minimizing formulation relates to $L^1$ rather than $L^2$ distances, suggesting use of the PAM (Partitioning Around Medoids) algorithm, which addresses exactly that minimization problem over candidate centers within the dataset. (That is, in our setting, PAM centers must correspond to ballots that were actually cast in the election.)  We can also consider allowing centers to vary over valid ballots, whether chosen by voters or not; and the computationally simplest problem of all would be to simply take coordinatewise medians, which may not correspond to ballots at all.


\subsection{Grouping the candidates}\label{subsec:agglom}

Another perspective on the problem is to introduce notions of similarity and dissimilarity on the candidates based on the preference profile.  By \emph{slate clustering}, we mean any method that  partitions the candidates into $k$ \emph{slates}. If desired, this can be used to induce a partition of the voters into {\em blocs} by associating each ballot with the slate to which it is (in some sense) closest. 

One very reasonable approach is to regard candidates as more similar when they tend to be ranked in closer positions by votes.

\begin{definition}[Candidate dissimilarity by rank difference]\label{D:dcand}
For any candidates $A_i,A_j\in \mathcal{C}$,
set $$\dcand(A_i,A_j):= \avg_\sigma \left|\bb(\sigma)_i-\bb(\sigma)_j\right|.$$
\end{definition}

The notation $D_B$ for a metric on candidates is used to emphasize that, like $d_B$ as a metric on ballots, this definition is constructed through rank differences.
As usual, we can apply this with either pessimistic or averaged convention. If we use the pessimistic convention, then unmentioned candidates all have a rank of zero, so unpopular candidates will be considered highly similar.  Depending on the application, this may or may not be desired behavior.

Under the averaged convention, the point $\bb(\sigma)$ is the centroid of the set of completions $\overline{\sigma}$ that extend $\sigma$ to a complete ballot.  For most of this paper, we have been content to work with the centroid.  For this application, we may learn more by dealing with the whole point cloud.

\begin{definition}[Candidate dissimilarity with point clouds]
For a ballot $\sigma$, let $\overline{\sigma}$ be the set of complete ballots that can be reached from $\sigma$ by a sequence of extensions.

Then for any candidates $A_i,A_j\in \mathcal{C}$,
set $$\dcandbar(A_i,A_j):= \avg_\sigma \avg_{\tau\in\overline{\sigma}} 
\left| \tau(i)-\tau(j) \right|.$$
\end{definition}


It is clear that $\dcand$ is a metric on candidates who receive rankings, but for $\dcandbar$ this requires proof.

\begin{proposition}$\dcandbar$ is a metric on the set of candidates.
\end{proposition}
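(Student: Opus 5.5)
Since $\dcandbar$ is an average of averages, I plan to verify the metric axioms for each fixed completion and then pass to the averages, with the positivity axiom as the one point that needs an actual argument.

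\emph{Symmetry and the triangle inequality.} For each cast ballot $\sigma$ and each completion $\tau\in\overline{\sigma}$, set $\rho_\tau(i,j):=|\tau(i)-\tau(j)|$. This is the pullback of the usual metric on $\R$ under the map $i\mapsto\tau(i)$. Hence $\rho_\tau$ is symmetric, satisfies $\rho_\tau(i,i)=0$, and obeys
$$\rho_\tau(i,k)\le\rho_\tau(i,j)+\rho_\tau(j,k).$$
The quantity $\dcandbar$ is a nonnegative weighted combination of the functions $\rho_\tau$: first an average over $\tau\in\overline{\sigma}$, then an average over ballots $\sigma$. Each of these properties is preserved under nonnegative linear combinations, so $\dcandbar$ inherits symmetry, $\dcandbar(A_i,A_i)=0$, and the triangle inequality. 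This step is routine.

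\emph{Positivity.} It remains to show $\dcandbar(A_i,A_j)>0$ whenever $i\neq j$. This is where $\dcandbar$ differs from $\dcand$. Under $\dcand$, two candidates that are both unmentioned on every ballot get identical Borda coordinates, so their distance is zero; this is why that metric was only claimed for candidates who receive rankings. Under the point-cloud definition, every $\tau\in\overline{\sigma}$ is a complete ballot, that is, a permutation. A permutation assigns distinct positions to distinct candidates, so $\tau(i)\neq\tau(j)$ and therefore $\rho_\tau(i,j)\ge 1$ for all $i\neq j$. Averaging gives
$$\dcandbar(A_i,A_j)\ge 1>0,$$
which holds for any candidates, including those never mentioned by any voter.

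\emph{Main obstacle.} The key step is the positivity observation, namely that resolving ties into full permutations separates every pair of candidates. Two conventions need to be pinned down for this to go through. First, $\overline{\sigma}$ must be nonempty, which holds because every partial ballot has at least one completion. Second, the averages must be taken over a nonempty profile, so I will assume at least one ballot is cast. The positions $\tau(i)$ can be read either as ranks or as Borda scores; the argument is the same either way, because $|\tau(i)-\tau(j)|$ is unchanged by the affine change $r\mapsto m-r$.
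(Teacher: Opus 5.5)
Your proposal is correct and follows essentially the paper's route. The paper gets the triangle inequality by writing $\dcandbar(A,B)$ as a single uniform average of $|\tau(A)-\tau(B)|$ over a multiset $S$ of completions, in which each completion of a length-$k$ ballot is repeated $\frac{m!}{(m-k)!}$ times so that every voter counts equally; this is your ``nonnegative combination of the pseudometrics $\rho_\tau$'' argument in different bookkeeping. Your explicit positivity argument, that every completion is a permutation and so $\dcandbar\ge 1$ off the diagonal, spells out what the paper simply calls clear.
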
 

\begin{proof} 
Clearly $\dcandbar$ is symmetric and positive definite.  To verify the triangle inequality, let $S$ denote the set of all completions of all cast ballots with the following redundancy convention.  If the ballot $\sigma$ from a given voter has length $k$, then the $(m-k)!$ distinct completions of $\sigma$ will each appear $\frac{m!}{(m-k)!}$ times in $S$. Thus each voter is represented  $m!$ times in $S$.  Notice that $\dcandbar(A,B)$ equals the average over all $\tau\in S$ of $|\tau(A)-\tau(B)|$. The triangle inequality follows.
\end{proof}

Now that we have inter-candidate distances, we can use a variety of methods to group the candidates into slates, and from there to assign corresponding blocs of voters.  We will focus on two among many possible methods.
\begin{itemize}
\item {\bf Centers.}
First cluster the candidates:  given $k$, identify the $k$ candidates as centers that minimize the summed  $\dcand$-distances to the centers.  The $i$th slate $\C_i$ includes those candidates closest to center $i$.  This slate embeds in $\R^m$ by applying $\bb$ to the generalized ballot $(\C_i, \ \C\setminus \C_i)$.  Now partition the voters by associating each ballot $\sigma$ to its closest slate point $\bb(\C_i)$.
    \item {\bf Agglomeration.} Agglomerate the candidates with respect to $\dcandbar$ and average linkage.\footnote{Agglomerative, or hierarchical, clustering requires a {\em linkage} choice: single, average, or complete. These merge point clouds successively by looking at their mutual distances as measured by minimum, average, or maximum distance of their constituent points, respectively. An example is carried out below in Figure~\ref{fig:agglom}.}  To obtain $k$ slates, stop after the appropriate number of agglomeration steps.  Now associate each ballot $\sigma$ to the slate to which it awards the most Borda points per candidate.
\end{itemize}
The first method is the most in keeping with our primary clustering methods for voters.  Since the set of candidates is small, the agglomerative clustering used in the second method is also attractive, because it yields an interpretable sequence of merges starting from the candidates as singletons and ending with a pair of slates.

\FloatBarrier
\subsection{Synthetic elections}\label{sec:synthetic}

One challenge for testing clustering methods on real-world data is the lack of ground truth about whether voter behavior was polarized---not finding a good nontrivial clustering can indicate a poor clustering method or an unpolarized election.  It is therefore desirable to build a test set of synthetic elections with controlled polarization attributes.

Given $n\in\N$, $p\in(0,1)$, and a complete ballot $\sigma\in\Omega_m$, we  randomly generate a cluster $\C(\sigma, n, p)\subset \Omega_m$ comprised of $n$ complete ballots, each obtained from $\sigma$ by performing a number of random adjacent swaps that is drawn from the geometric random variable with parameter $p$, in the style of a Mallows model.  In other words, each ballot is obtained via a random walk on the complete portion of the ballot graph starting from $\sigma$, with as many steps as a $p$-weighted coin comes up tails.  A higher $p$ value corresponds to a tighter cluster (because the coin lands heads more quickly so the walks are shorter).  A simple polarized election can then be formed from two or more such clusters, amounting to a mixed Mallows model.  We first consider an election-valued random variable 
$$\E=\C(\text{ABCDE}, 300, 0.3)\cup \C(\text{EDCBA} , 700, 0.3)\subset\Omega_5.$$
An instance is shown in Figure~\ref{F:Mallow_MDS}.
Here, we use a standard technique called {\em multidimensional scaling} (MDS) for the visualization:  from the full matrix of pairwise distances between points, we find a low-distortion planar embedding in order to visualize the metric.  We will employ MDS for visualization throughout the empirical sections of the paper.

\begin{figure}[bht!]\centering
\includegraphics[width=3.8in]{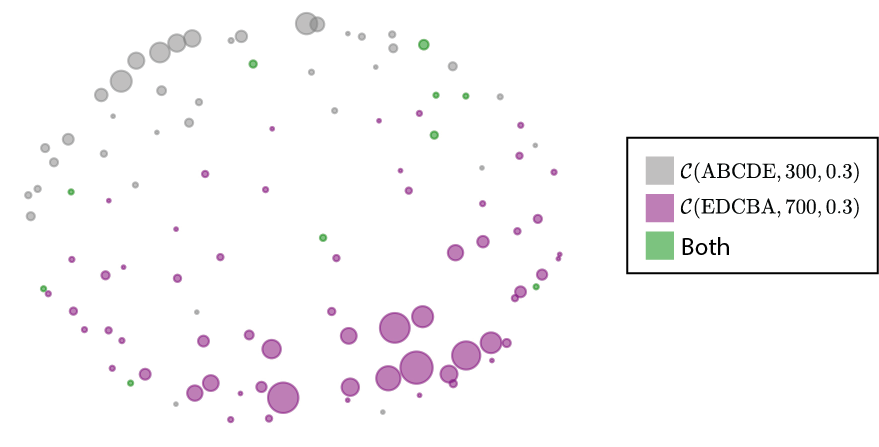}
\caption{MDS plot of the ballots in a synthetic election $\E$. 
 Ballots marked green were hit from both centers.}\label{F:Mallow_MDS}
\end{figure}

All of our modeling choices for $2$-clustering the voters pick out the ground-truth clusters in this election, even though the cluster sizes are unbalanced.  For example, over several instances of $\E$ and several runs of PAM, the identified centers are always $\{ABCDE, EDCBA\}$, with cluster sizes close to $\{300,700\}$.  That is, it finds the correct centers and identifies each ballot with its closest center rather than the site from which it originated.  Even though Lloyd's algorithm for the $2$-means problem targets minimal $L^2$ cost rather than $L^1$, it gives clusters that differ by only about $1\%$ from the $L^1$ clusters.\footnote{Two partitions differ by $1\%$  if $1\%$ of the points must be reclassified to convert one partition into the other.}  The agglomerative method discussed in the previous section yields the reasonable slates $\{\{A,B, C\},\{D,E\}\}$ and produces voter clusters that differs by only about $4\%$ from the previous ones, with cluster sizes (in one instance of $\E$) of $\{317, 683\}$.

\begin{figure}[htb!]\centering
\includegraphics[width=5.2in]{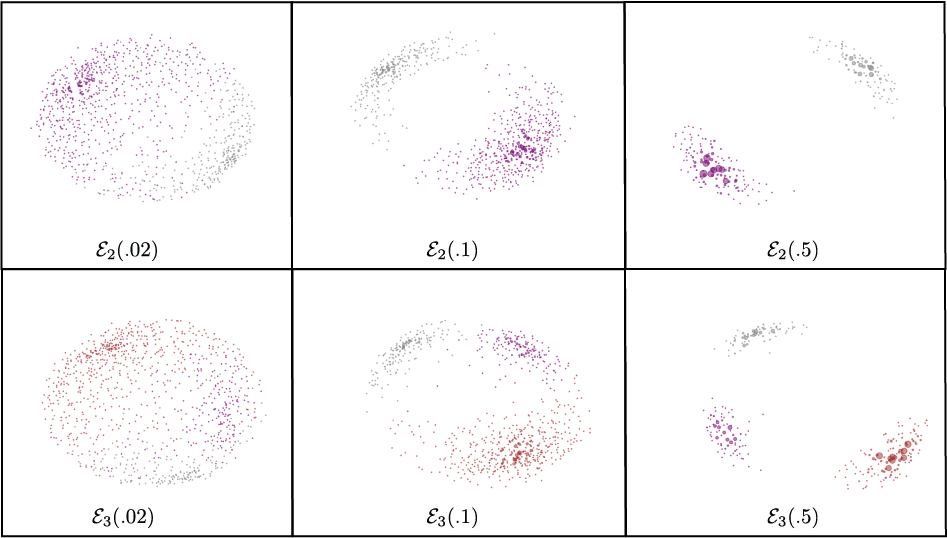}
\caption{MDS plots of the ballots in $\E_2(p)$ and $\E_3(p)$ for three values of $p$.}\label{F:E1E2E3}
\end{figure}

\begin{figure}[htb!]\centering
\includegraphics[width=5.2in]{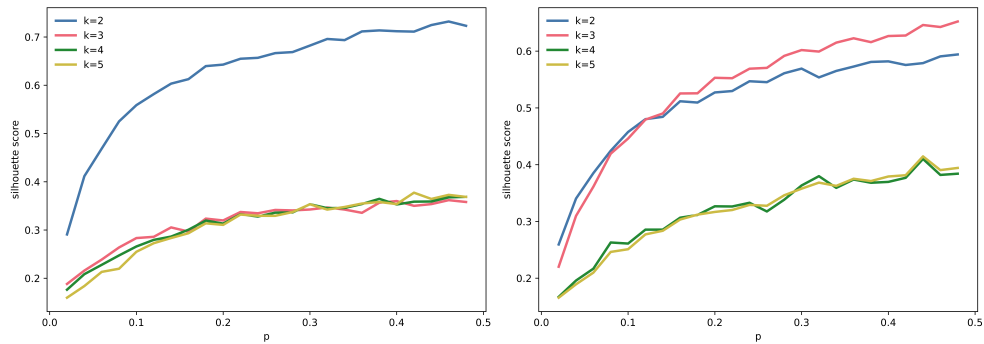}
\caption{Silhouette scores for Lloyd's $k$-means clusterings of $\E_2(p)$ (left) and $\E_3(p)$ (right) for values of $p$ from $.02$ to $.5$ by steps of $.02$.  (We show $k$-means instead of PAM clusters  for computational efficiency.)}\label{F:Sil_p}
\end{figure}

Next, consider $9$-candidate elections $\E_2(p)$ and $\E_3(p)$,  with instances shown in Figure~\ref{F:E1E2E3}.
\begin{align*}
\E_2(p)  & =  \C(\text{ABCDEFGHI}, 300, p)
 \cup\C(\text{HGEIFCBAD}, 700, p);\\              
\E_3(p)  & =  \C(\text{ABCDEFGHI}, 200, p)
                            \cup\C(\text{DFEAHBGCI}, 200, p)
                            \cup\C(\text{HIGDEFCBA}, 600, p)
\end{align*}

For \emph{all} values of $p$ between $0.02$ and $0.5$ by steps of $0.02$, the PAM 2-medoids algorithm correctly identified the two centers of $\E_2(p)$ as $\text{ABCDEFGHI}$ and $\text{HGEIFCBAD}$.  This remained true over multiple runs of PAM and multiple instances of $\E$.
For \emph{almost all} values of $p$ in this same range, the PAM 3-medoids algorithm correctly identified the three centers of $\E_3(p)$; The only discrepancies occurred with $p=0.02$ and $p=0.04$; in some trials with these values, one or two of the medoids were off by a single swap move.

There are several common methods for identifying the "best" choice for the number of clusters in a dataset.
One common method is to construct {\em silhouette scores}, which compare a point's average distance to other points within its cluster against the average distance to points in the nearest other cluster.
These are thought to be normalized so that if the silhouette score for  the best 3-clustering is larger than the score for the best 2-clustering, it signals that the data is better described by three clusters.
Figure~\ref{F:Sil_p} shows that this silhouette scores in our synthetic elections.   
For $\E_2(p)$, the silhouette method clearly picks out two clusters, even at very low values of $p$.  However,  for $\E_3(p)$, two or three clusters looks equally apt for $p\leq 0.14$, and better than four or five.
 
\clearpage
\section{Results on Scottish elections}

We begin with a detailed walk-through of implementing the clustering methods on a particular (arbitrary) contest from the Scottish local government elections:  
Ediburgh Ward 2 (Pentland Hills), 2017. From there, we will zoom out to an overview of the whole Scottish dataset.

\subsection{Learning voter blocs in Pentland Hills}

The Pentland Hills election had seven candidates competing for three seats.

\begin{center}
{\small 1=Graeme Bruce (\Con), 2 = Emma Farthing (\LD), 3 = Neil Gardiner (\SNP), 4 = Ricky Henderson (\Lab),\\ 5 = Ernesta Noreikiene (\SNP), 6= Susan Webber (\Con), 7 = Evelyn Weston (\Grn).  }
\end{center}

Here, \Con is the Conservative and Unionist Party, \LD is Liberal Democrats, \SNP is the (ruling, centrist) Scottish National Party, \Lab is Labour, and \Grn is the Green Party.  
These five  are the most important players in local government, together holding \num{1071} out of \num{1227} seats at the time of writing.  Independent candidates,  not to be confused with candidates for the Independence for Scotland Party (ISP), also receive significant support.

\begin{figure}[bht!]\centering
\includegraphics[width=5in]{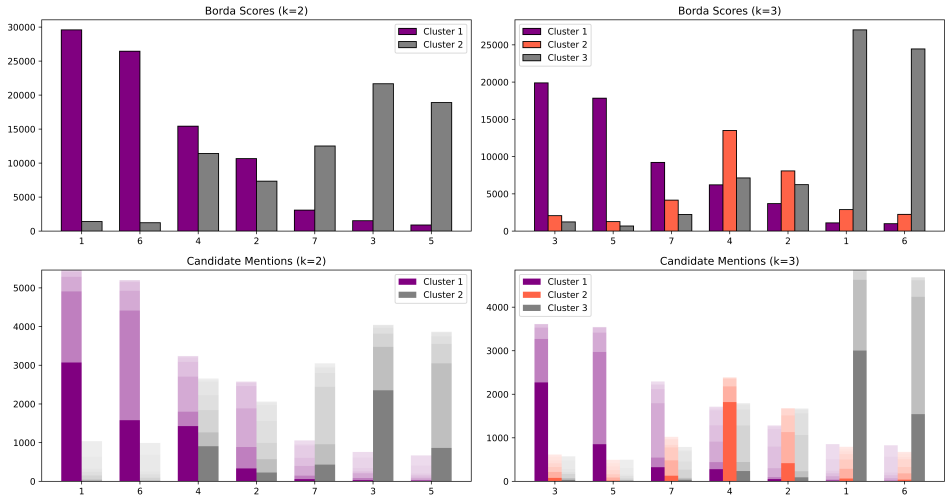}
\caption{Borda plots (top) and mentions plots (bottom) of the optimal $k$-clusterings using Borda distance with $k=2$  and $k=3$.  The total height of each bar in a "mentions" plot is the number of ballots in the cluster that ranked the candidate in any position, with first-place rankings represented by the darkest shade and so on.}\label{fig:borda-histograms}
\end{figure}

First, we describe the preference profile.
The 11,315 cast ballots in this contest ranged from bullet votes (length $1$, occurring \num{967} times) to complete ballots (length $7$, occurring 1431 times). Overall, the average ballot length was $3.2$.
There are 18 ballots cast over 100 times each, led by the ballot $(1,6)$ ranking the conservatives in alphabetical order and leaving the rest blank---that ballot was cast by 1342 voters, about 11.8\% of the electorate.  Together, the ones receiving at least \num{100} votes  account for more than half of the votes, but there is an extremely long tail, with \num{660} ballots receiving exactly one vote.
Overall,  \num{1238} different types were cast out of \num{8659}  possible valid ballots.

In Figure~\ref{fig:borda-histograms}, we show the global $d_B$ optima for minimizing cost over all ballots for $k=2$ and $k=3$.  The $k=2$ solution partitions the voters into two clusters of roughly equal size (\num{6513} in Cluster 1 and \num{4802} in Cluster 2).  The $k=3$ solution has clusters of size $3798$, $2565$ and $4955$.

\begin{figure}[htb!]\centering
\includegraphics[width=5in]{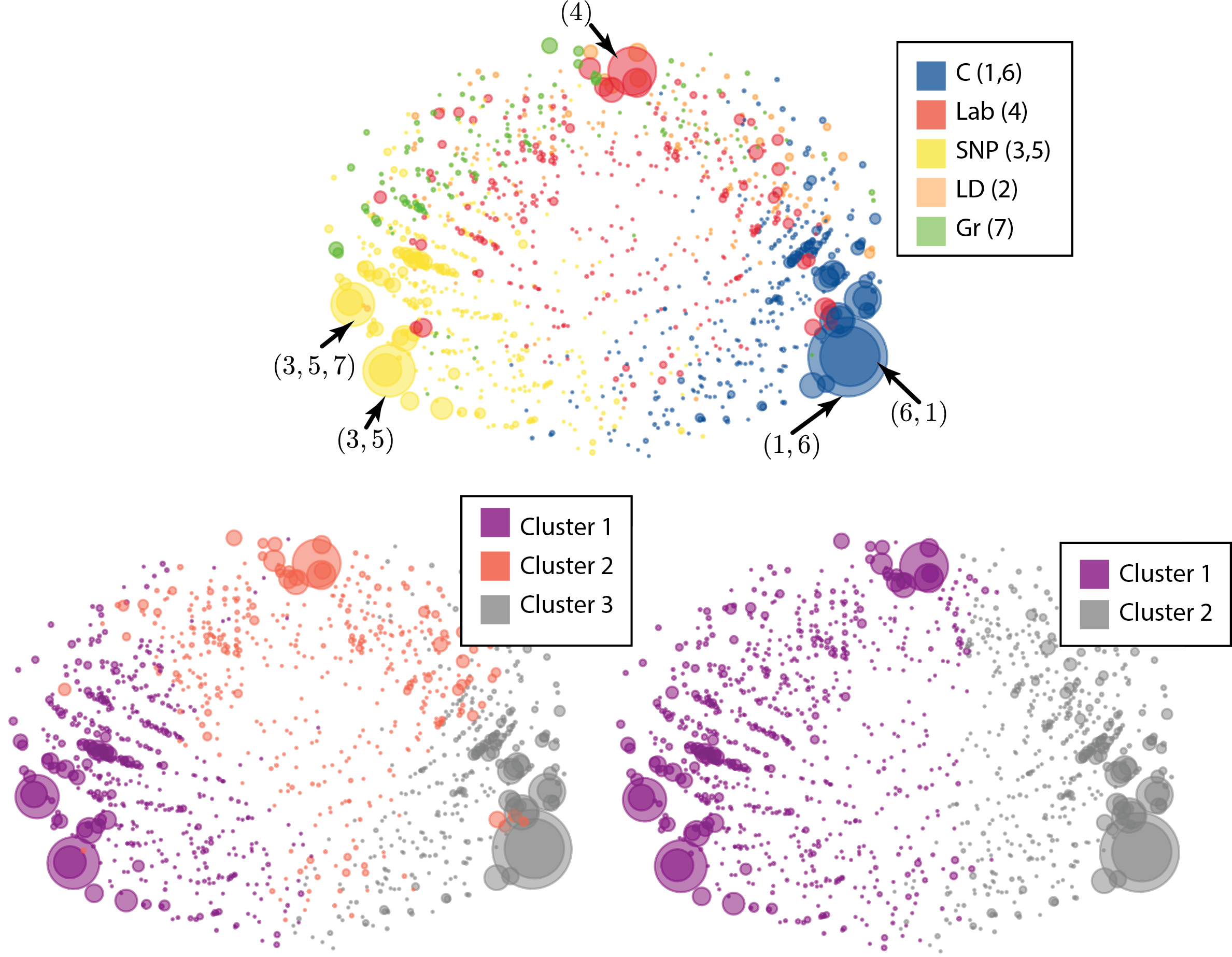}
\caption{Visualization  of the ballots in Pentland Hills with respect to $\bb$.  
All three images use the same low-distortion MDS embedding of the ballots, approximating their $d_B$ distances.
Each marker's area is proportional to the number of voters who cast that ballot.   
In the top plot, the ballots are colored by the party of the first-place vote, and on the bottom they are colored by their cluster assignment for $k=2$ and $k=3$ clusters.}\label{F:MDS}
\end{figure}

Whether we use head-to-head,  pessimistic, or averaged Borda, whether we use $L^2$ or $L^1$ methods, and whether we require cast-ballot centers, valid ballots, or allow arbitrary centers in coordinate space, we obtain very similar centers and very similar groupings of voters.    
The $k=2$ cast-ballot centers (the pair of ballots that minimize the cost) always includes $(1,6)$ as one center.  The other center always begins with $(3,5)$ and may be $(3,5,7,4)$, $(3,5,7)$, or $(3,5,4)$ depending on the method.
For $k=3$, all of the embedding choices produce the same centers: $(1, 6), (3, 5, 7), (2, 4)$.

What is the right choice for the number of clusters in this Pentland Hills example?  
Silhouette scores identify $k=2$.  However, the choice $k=3$ also scores well and is compelling because, as illustrated in Figure~\ref{F:MDS}, the  $3$-clustering assignments look reasonable and align closely with the party of the first-place vote; that is, the choice $k=3$ best encodes the \SNP--\Lab--\Con triangle that stands out in the party labeling (Figure~\ref{F:MDS}).

\FloatBarrier
\subsection{Learning candidate slates in Pentland Hills}

Next, we sort the candidates rather than the voters.  
As illustrated in Figure~\ref{fig:agglom}, the slate clusters are identical whether we use the centers method based on raw rank difference ($\dcand$) or the agglomerative method with ballot completion ($\dcandbar$).\footnote{Switching to complete or single linkage instead of average linkage makes only small changes.  At $k=2$ partition, the slating  remains unchanged under all linkage methods. The $k=3$  partition remains the same using complete linkage, whereas single linkage produces the slates 
$\{1,6\}, \{4\}, \{2,3,5,7\}$.}  A comparison of Figures~\ref{F:MDS} and \ref{fig:agglom} also shows that these inter-candidate differences echo the embedded structure of the full preference profile.

The candidate-centric methods agree that 
$\{1,6\}$ is a candidate cluster whether there are two clusters or three.  That squares well with the identification of $(1,6)$ as a ranked ballot appearing as central to blocs across all voter clustering methods in the previous section.  One difference is whether the Green candidate (7) is grouped with SNP candidates (3 and 5) or with the Labour/Liberal Democrat candidates (2 and 4).  As Figure~\ref{fig:agglom} shows, either choice is close to optimal in the space of candidates.


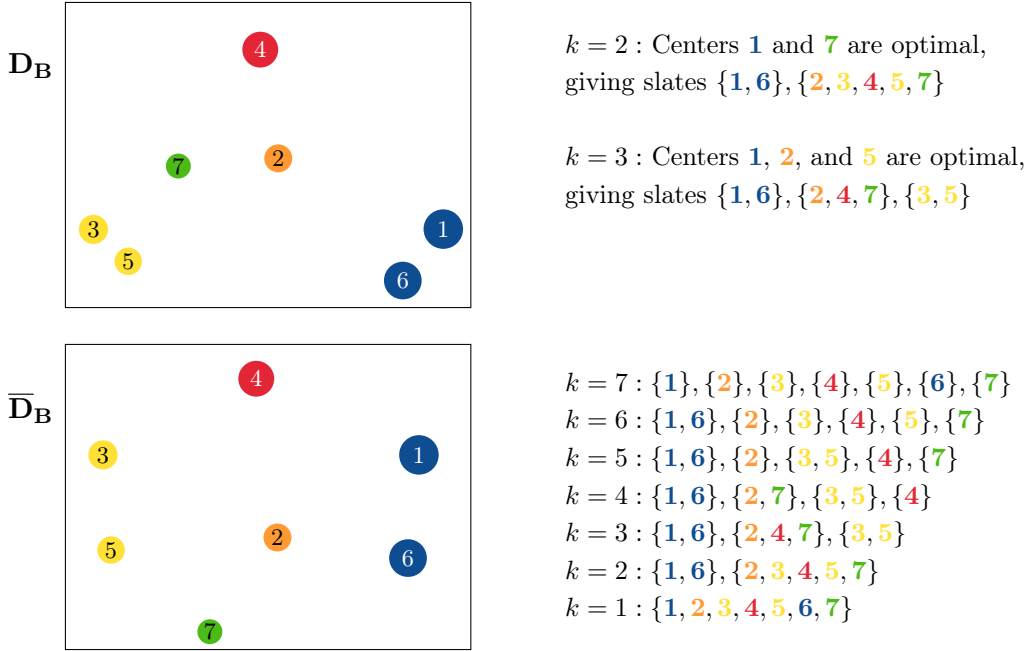
\begin{figure}[bht!]\centering
\newcommand{\one}{{\color{Concolor}\mathbf 1}}
\newcommand{\two}{{\color{LDcolor}\mathbf 2}}
\newcommand{\three}{{\color{SNPcolor}\mathbf 3}}
\newcommand{\four}{{\color{Labcolor}\mathbf 4}}
\newcommand{\five}{{\color{SNPcolor}\mathbf 5}}
\newcommand{\six}{{\color{Concolor}\mathbf 6}}
\newcommand{\seven}{{\color{Gcolor}\mathbf 7}}

\begin{tikzpicture}
\tikzset{
  p/.style={circle, inner sep=0pt, font=\small}
}

\begin{scope}[yshift=4cm,rotate=-70]
\node[p, fill=Concolor, minimum size=2.3*6.5pt]  at ( 1.266, 2.204)[text=white] {1}; 
\node[p, fill=LDcolor,  minimum size=2.3*4.6pt]  at (-0.369, 0.464) {2}; 
\node[p, fill=SNPcolor, minimum size=2.3*4.8pt]  at (-0.321,  -2.168) {3}; 
\node[p, fill=Labcolor, minimum size=2.3*5.9pt] at (-1.810, 0.734)[text=white] {4}; 
\node[p, fill=SNPcolor, minimum size=2.3*4.5pt] at ( 0.236,  -1.879) {5}; 
\node[p, fill=Concolor, minimum size=2.3*6.2pt]  at ( 1.725, 1.464)[text=white] {6}; 
\node[p, fill=Gcolor,   minimum size=2.3*4.1pt]  at (-0.727,  -0.819) {7}; 
\end{scope}

\begin{scope}[scale=.7,yshift=.8cm]
\draw (-3.6,3-.2) rectangle (4.1,8.8-.2);
\draw (-3.6,-3.5-.2) rectangle (4.1,2.3-.2);
\node at (-3.65,7.4) [left] {\large $\mathbf \dcand$};
\node at (-3.65,.9) [left] {\large $\mathbf \dcandbar$};
  \node[p, fill=Concolor, minimum size=2.3*6.5pt] at ( 3.114,  0.000) [text=white] {1};
  \node[p, fill=LDcolor,  minimum size=2.3*4.6pt] at ( 0.427,  -1.570) {2};
  \node[p, fill=SNPcolor, minimum size=2.3*4.8pt] at (-2.886,  0.000) {3};
  \node[p, fill=Labcolor, minimum size=2.3*5.9pt] at ( 0.025, 1.448)[text=white] {4};
  \node[p, fill=SNPcolor, minimum size=2.3*4.5pt] at (-2.732,  -1.808) {5};
  \node[p, fill=Concolor, minimum size=2.3*6.2pt] at ( 2.907,  -1.961)[text=white] {6};
  \node[p, fill=Gcolor,   minimum size=2.3*4.1pt] at (-0.856,  -3.361){7};
\end{scope}

\node at (4,1.5) [right] {$k=7: \{\one\},\{\two\},\{\three\},\{\four\},\{\five\},\{\six\},\{\seven\}$};
\node at (4,1) [right] {$k=6: \{\one,\six\}, \{\two\},\{\three\},\{\four\},\{\five\},\{\seven\}$};
\node at (4,.5) [right] {$k=5: \{\one,\six\}, \{\two\},\{\three,\five\},\{\four\},\{\seven\}$};
\node at (4,0) [right] {$k=4: \{\one,\six\}, \{\two,\seven\},\{\three,\five\},\{\four\}$};
\node at (4,-.5) [right] {$k=3: \{\one,\six\}, \{\two,\four,\seven\}, \{\three,\five\}$};
\node at (4,-1) [right] {$k=2: \{\one,\six\}, \{\two,\three,\four,\five,\seven\}$};
\node at (4,-1.5) [right] {$k=1: \{\one,\two,\three,\four,\five,\six,\seven\}$};

\node at (4,6) [right] {$k=2:$ Centers $\one$  and $\seven$ are optimal,};
\node at (4,5.5) [right] {giving slates $\{\one,\six\},\{\two,\three,\four,\five,\seven\}$};
\node at (4,4.5) [right] {$k=3:$ Centers $\one$, $\two$,  and $\five$ are optimal,};
\node at (4,4) [right] {giving slates $\{\one,\six\},\{\two,\four,\seven\}, \{\three,\five\}$};
\end{tikzpicture}

\caption{MDS plots of the Pentland Hills candidates by $\dcand$ and $\dcandbar$, colored by party and sized by the number of mentions received. The optimal centers for $\dcand$ slate clustering are shown, as well as the successive steps for agglomerative $\dcandbar$ slate clustering.  The $k=2$ and $k=3$ slates match exactly across the two methods.}\label{fig:agglom}
\end{figure}

\FloatBarrier
\subsection{Overview across dataset}
\label{subsec:scottish-overview}



To approach clustering on the Scottish dataset, we first compare $2$-clustering methods.  We can efficiently sweep the dataset with the heuristic methods described above.%
Then, with integer programming methods described in forthcoming work  with David Shmoys and Peter Rock, we can confirm that the heuristic optima are often in fact globally optimal.  In over $90\%$ of Scottish elections ($981$ matches with Borda embeddings and $1010$ with head-to-head), the PAM heuristic finds precisely optimal centers.  
Appendix~\ref{app:compare} offers a fuller discussion of comparison for the approximate methods.

More fundamentally, we can study the degree of agreement of exact optima across methods.  To evaluate this, we can start by regarding the center of the whole election as a summary of the preference profile---the minimum-cost cast ballot.
For $k=1$, 74.6\% of the elections have matching head-to-head and Borda centers, meaning that the Kemeny rule would give the same output ranking as the Borda median rule, even with partial ballots available.  This is notable because Kemeny is strongly Condorcet consistent---if any candidate is preferred head-to-head to all others, they rank first; if any candidate is preferred to all but the first, they rank second, and so on.  Rules based on positional scores like Borda count have no such guarantee and are a priori very different. 

When we pass to optimal pairs under cost-minimization, the exact agreement of centers is significantly more demanding.  But for $k=2$, the degree of agreement is almost as high:  73.5\% of the elections have perfectly matching $d_H$ and $d_B$ centers (i.e., both centers agree).

Though our methods for grouping candidates are party-agnostic, we can use party labels as a partial ground-truth validation.  
Both methods of slate clustering perform well at keeping political parties together.  Among the $1070$ Scottish elections, $855$ of them include more than one candidate from at least one of the five major parties (\SNP, \Lab, \Con, \LD, \Grn).  It is extremely rare (occurring eight times for the centers method and just three for the agglomerative method) for the $k=2$ slates to split any pairs of candidates from the same party.  But beyond recovering parties, we can discover inter-party structure: for instance, the two most consistently close parties across this dataset are the Scottish National Party and the Greens (rather than pairing the ruling \SNP party with Labour or Liberal Democrats, as might have been expected).  

\section{Conclusion}

Although aggregating rankings is an extremely popular topic in the computing literature, surprisingly little of that work is adapted to the real-world setting of ranked choice elections for political office, where an extension to partial rankings is a practical necessity.  First and foremost, this paper develops metric theory that situates ballots, candidates, and slates (or sets of candidates) all in common spaces that admit natural extensions of the Kendall tau and Spearman footrule metrics.  This enables the use of coordinate tools, on one hand, and graph/network-based tools, on the other.

Building on this foundation, we give detailed methods for grouping voters into like-minded blocs and for grouping candidates according to voter preference.  Bloc and slate identification then unlocks the measurement of key descriptive statistics:  to measure the {\em polarization} of an election, we look at the diffusion of blocs about their centers and the distance between those centers.  As a direct consequence of inequalities connecting the principal metrics under study, sufficiently polarized elections must give consistent partitions of voters by either ranking metric.  Given blocs and slates, it is now straightforward to assess the {\em proportionality} of a voting rule by whether the slates receive representation in shares aligned with the sizes of voter blocs.  

The techniques developed here broadly enable visualization and hands-on data analysis, while giving a strong foundation for unsupervised techniques to learn structural information about observed elections.  

\vspace{1in}

\section*{Acknowledgements}
The authors are pleased to thank David Shmoys, Peter Rock, Chris Donnay, Edith Elkind, and Grant Rinehimer for overlapping collaborations and valuable feedback.

%
%

\clearpage
\bibliographystyle{plainnat}
\bibliography{bibliography}

\clearpage
\appendix

\section{Comparing heuristic methods}\label{app:compare}

Table~\ref{T:methods} indicates a high degree of concordance among outputs over our database of 1070 Scottish council elections between 2012 and 2022. In particular, the choice of embedding ($\bb$ or $\hh$) is generally of fairly mild consequence.  Although we did not included in the table, $\bb_{\text{avg}}$ performed very similarly to $\bb$.  Figure~\ref{F:method-MDS-bloc-size} (left) uses MDS to visualize the distances reported in the table.  Recall that since Lloyd's is solving the $L^2$ problem, it's giving back the standard Borda ranking (ranking candidates by their total Borda score) while PAM is giving a median in place of an average.

\begin{table}[bht!]
\begin{tabular}{cccccc}
\toprule & Lloyd $\bb$ & Lloyd $\hh$ & PAM $\bb$ & PAM $\hh$  \\
\midrule Lloyd $\bb$ &0.00 &0.02 &0.07 &0.07  \\
Lloyd $\hh$  & -& 0.00& 0.07&0.07  \\
PAM $\bb$ &- &-&0.00 &0.04  \\
PAM $\hh$ &- &- &- & 0.00 \\
\bottomrule
\end{tabular}
\caption{The average over 1070 elections of the differences between the $2$-clustering methods.  Each cell represents the portion of ballots in the election for which the two partitions disagree on average.  Diagonal cells represent the average disagreement between two runs of the same method. 
 All cells are rounded to $2$ decimals.}\label{T:methods}
\end{table}


Next, we compare the results of slate clustering by the two methods emphasized above.  To compare the sizes of slates in an apples-to-apples fashion, we restrict to the 282 Scottish elections that have seven candidates (the most common number of candidates).  For the separation of candidates into two slates, we report the size of the smaller slate. 
\begin{itemize}
\item Via centers: 35 slates of size 1, 164 slates of size 2, 83 slates of size 3
\item Via agglomeration: 23 slates of size 1, 185 slates of size 2, 74 slates of size 3
\end{itemize}

Figure~\ref{F:method-MDS-bloc-size} (right) shows the distribution of bloc sizes for each $2$-clustering method.  None of the clustering methods specifically target balanced bloc sizes, but the results in Scotland tend toward balanced blocs for all clustering methods.

\begin{figure}[bht!]\centering
\begin{tikzpicture}
\node at (0,0) {\includegraphics[height=1.8in]{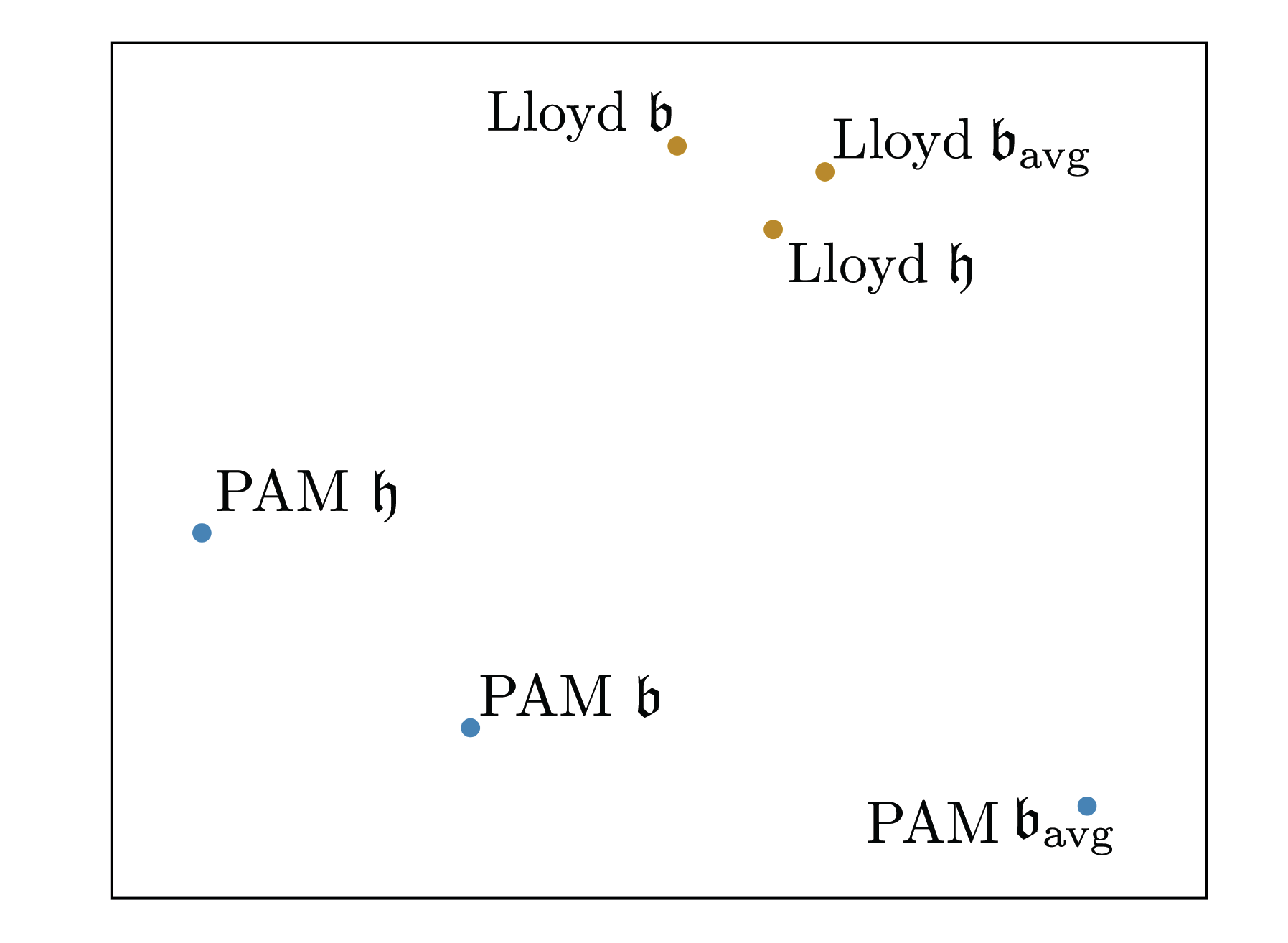}}; 
\node at (6.5,-.3) {\includegraphics[height=1.8in]{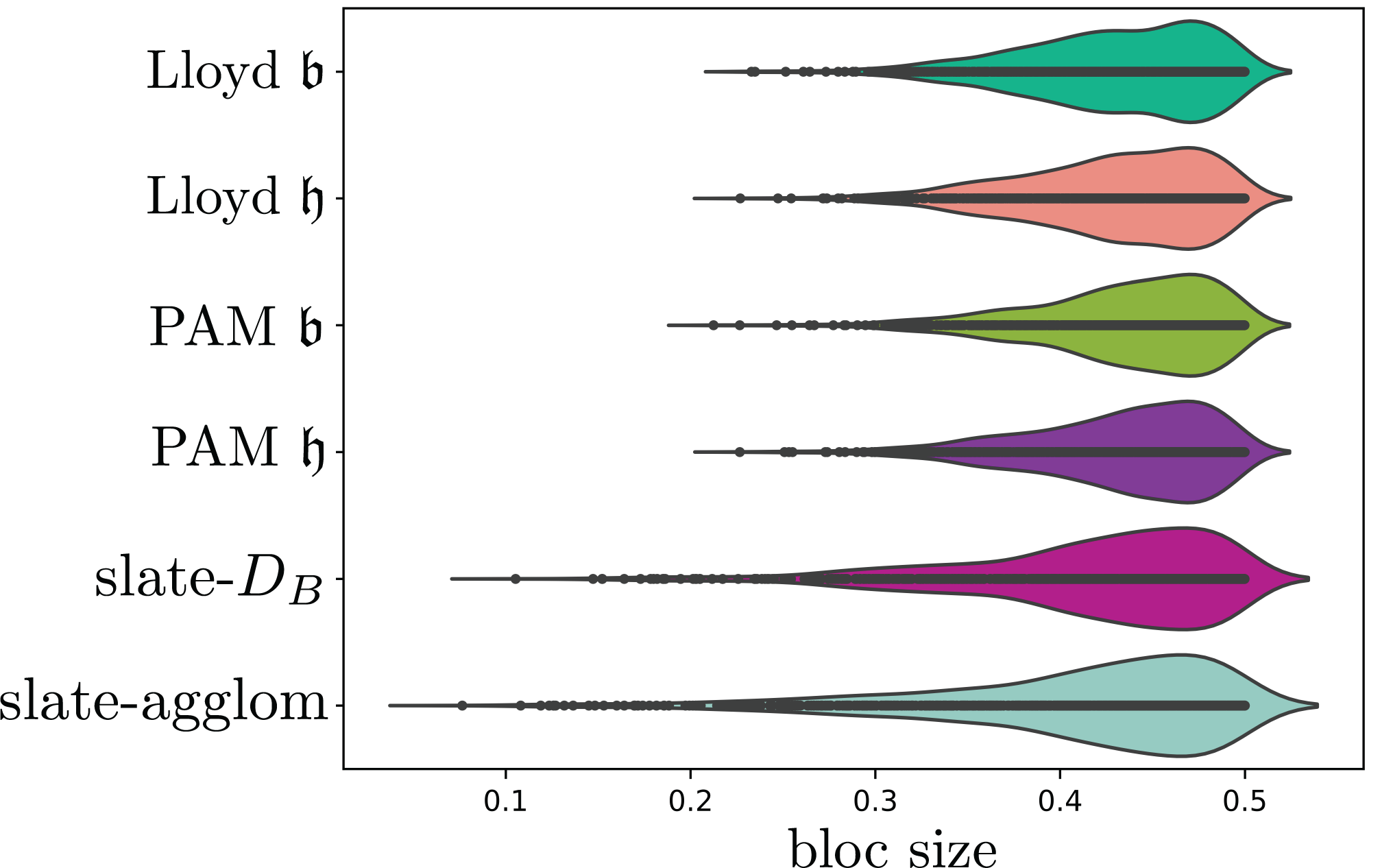}}; 
\end{tikzpicture}
\caption{LEFT: MDS plot of distances between clustering methods from Table~\ref{T:methods}. 
 RIGHT: A visualization of how balanced the cluster sizes are in the 2-bloc case.  Smaller bloc shown.}\label{F:method-MDS-bloc-size}
\end{figure}


\clearpage
\section{Visualizing polarization}\label{app:slate-viz}

The agglomerative slates for $k=2$ and $3$ in the Pentland Hills election are pictured in Figure~\ref{F:heatmap} in a way that illustrates the election's level of polarization with respect to these slates.  The idea of this visualization is that the $k$ vertices of the simplex
$\Delta^{k-1} = \left\{(x_1,...,x_k)\mid \sum x_i=1, \text{all }x_i\geq 0\right\}$
represent the $k$ slates.  Each ballot $\sigma$ cast in the election has a position $f(\sigma)\in\Delta^{k-1}$ determined so that a ballot is positioned closer to a slate that it more strongly supports, and the plots illustrate the density of the cast ballot images over the simplex.  More precisely, we used the function
\begin{equation}\label{E:heat}f(\sigma) = \frac{(v_1,...,v_k)}{v_1+\cdots+ v_k},\end{equation}
where $v_i$ is the average number of (pessimistic) Borda points that $\sigma$ awards to the candidates in the $i^{\text{th}}$ slate.  

This definition of $f$ is consistent with our agglomerative slate-clustering  method of assigning ballots to slates; that is, the agglomerative method was defined to assign each ballot $\sigma$ to the slate corresponding to the vertex of $\Delta^{k-1}$ closest to $f(\sigma)$.


\begin{figure}[bht!]\centering
\includegraphics[width=5.5in]{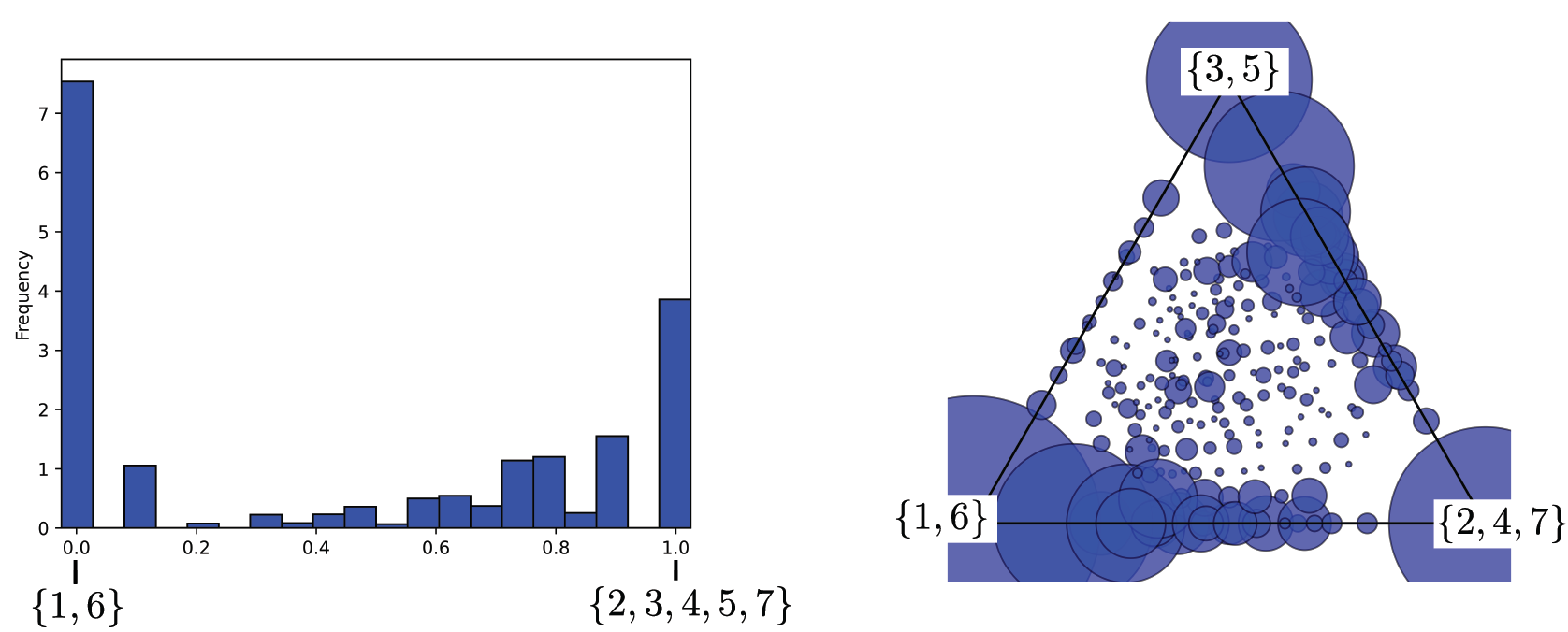}
\caption{Plots showing the density of ballots cast in the Pentland Hills election with respect to the $k=2$  and $k=3$  slates obtained from either clustering method.  On the  right, the triangle represents $\Delta^{2}$ and the area of each circle is proportional to the number of ballots $\sigma$ mapped to $f(\sigma)$.}\label{F:heatmap}
\end{figure}

For both $k=2$ and $k=3$, the set of slates in Figure~\ref{F:heatmap} arose from agglomerative clustering of the candidates, but it happens to also be the partition of the candidates that minimize the sum over the ballots $\sigma$ of the Euclidean distance in $\Delta^k$ from $f(\sigma)$ to the nearest vertex (slate).  In other words,  Figure~\ref{F:heatmap} helps to visualize the profile's level of polarization with the choice of slates maximizing that polarization.  This observation suggests another reasonable slate clustering method: directly select the slates that solve this optimization problem.  Compared to the agglomerative method, this alternative version is computationally more demanding but yields an interpretable visualization.  The two method agree in the Pentland Hills election (because they turn out to yield the same slates, and they use the same method of associating ballots to slates).

\end{document}